\definecolor{Purple}{rgb}{.7,0.08,0.6} 
\theoremstyle{plain}
\newtheorem{Thm}{Theorem}
\newtheorem{Cor}[Thm]{Corollary}
\newtheorem{Prop}[Thm]{Proposition}
\newtheorem{Lem}[Thm]{Lemma}
\theoremstyle{definition}
\newcommand{\B}{\mathbf B}
\renewcommand{\d}{{\tt{d}}}
\newcommand{\g}{g}
\renewcommand{\ge}{g_\epsilon}
\renewcommand{\r}{\rho}
\renewcommand{\Re}{\operatorname{Re}}
\renewcommand{\bar}{\overline}
\renewcommand{\tilde}{\widetilde}
\newcommand{\C}{\mathbb C}
\newcommand{\Cin}{\mathbf C}
\newcommand{\CinII}{\mathbf C^2}
\newcommand{\Cint}{\Cin^{(t)}}
\newcommand{\CintI}{\Cin^{(t), 1}}
\newcommand{\CintII}{\Cin^{(t), 2}}
\newcommand{\Cine}{\mathbf{C}_\epsilon}
\newcommand{\Cker}{C}
\newcommand{\CkerI}{C^1}
\newcommand{\CkerII}{C^2}
\newcommand{\Ctr}{\mathcal C}
\newcommand{\Ctre}{\mathcal C}
\newcommand{\Dt}{D_t}
\newcommand{\Ha}{\mathcal H_\alpha (\bd D)}
\newcommand{\Hp}{\mathbf{H}^p}
\newcommand{\hp}{\mathcal{H}^p}
\newcommand{\htwo}{\mathcal{H}^2(\bd D)}
\newcommand{\Ht}{\mathcal{H}_\vartheta (\bd D)}
\newcommand{\rt}{\rho_t}
\renewcommand{\iota}{s}
\renewcommand{\O}{\Omega}
\newcommand{\Pz}{\mathbf{P}_{\!z}}
\renewcommand{\S}{\mathcal S}
\newcommand{\R}{\mathbb R}
\newcommand{\Cn}{\mathbb C^n}
\newcommand{\Pt}{\Phi_t}
\newcommand{\N}{\mathbb N}
\newcommand{\dee}{\partial}
\newcommand{\deebar}{\overline\partial}
\renewcommand{\N}{\mathcal N}
\newcommand{\bndry}{b}
\numberwithin{equation}{section}
\newcommand{\bd}{\bndry}
\begin{document}
\title
[Hardy Spaces]{Hardy spaces of holomorphic functions for domains in $\Cn$ with minimal smoothness}
\author[Lanzani and Stein]{Loredana Lanzani$^*$
and Elias M. Stein$^{**}$}
\thanks{$^*$ Supported in part by the National Science Foundation, awards DMS-1001304 and DMS-1503612.}
\thanks{$^{**}$ Supported in part by the National Science Foundation, awards
DMS-0901040 and and DMS-1265524.}
\address{
Dept. of Mathematics,       
Syracuse University 
Syracuse, NY 13244-1150 USA}
  \email{llanzani@syr.edu}
\address{
Dept. of Mathematics\\Princeton University 
\\Princeton, NJ   08544-100 USA }
\email{stein@math.princeton.edu}
  \thanks{2000 \em{Mathematics Subject Classification:} 30E20, 31A10, 32A26, 32A25, 32A50, 32A55, 42B20, 
46E22, 47B34, 31B10}
\thanks{{\em Keywords}: Hardy space; Cauchy Integral; 
 Cauchy-Szeg\H o projection;  Lebesgue space; pseudoconvex domain; minimal smoothness; Leray-Levi measure}
\begin{abstract} 
We prove various representations and density results for Hardy spaces of holomorphic functions  for two classes of bounded domains in $\Cn$, whose boundaries satisfy minimal regularity conditions (namely the classes $C^2$ and $C^{1,1}$ respectively) together with naturally occurring notions of convexity. 
\end{abstract}
\maketitle
\centerline{\em Dedicated to the memory of Cora Sadosky}
\section{Introduction}
Here we discuss
 the interplay of the holomorphic Hardy spaces with 
  the Cauchy integrals, and with the Cauchy-Szeg\H o projection\footnote{also known as the {\em Szeg\H o projection}.}, for a broad class 
 of bounded domains $D\subset \Cn$ when $n\geq 2$. We make minimal assumptions on the domains' boundary regularity.   While
  such interplay is
   fairly well understood in the context of one complex variable (that is for $D\subset \C$, see \cite{Du}, \cite{Ke-1}, 
   \cite{L}, \cite{LS-1}  and references therein) the situation in higher dimensions presents
    further obstacles, both conceptual and technical in nature, which
  require a different analysis.

 \smallskip
 
 At the root of all these questions
   are three basic issues.
  \smallskip
  
  First, the fact that for any bounded domain $D\subset \Cn$ whose boundary is of class $C^2$ there are two characterizations of the Hardy space $\Hp (D)$, see \cite{S}:
  \smallskip
  
  The holomorphic functions $F$ for which
  \begin{equation}
  \label{E:BVP-hol}
  \|\,\N(F)\|_{L^p(\bndry D, d\sigma)} <\infty\, ,
  \end{equation}
 where  $\N(F)$ is the so-called ``non-tangential maximal function'' of $F$, see \eqref{E:5.2p} below, and $d\sigma$ is the induced Lebesgue measure on the boundary of $D$. Alternatively,
 \begin{equation}\label{E:5.1}
\sup\limits_{0<t<c}\,\int\limits_{w\in\bndry\Dt}\!\!\!|F(w)|^p\,d\sigma_t(w)<\infty.
\end{equation}
where $\{\Dt\}_t$ is an exhaustion of $D$ by appropriate sub-domains $\Dt$. The quantities \eqref{E:BVP-hol} and
\eqref{E:5.1} give equivalent norms of the space $\Hp (D)$, which is also known as the {\em Smirnov class}, see \cite{Du}. Such $F$ have (non-tangential) boundary limits 
$$
f=\dot F\, .
$$
If $\hp (\bndry D, d\sigma)$ denotes the space of such functions, then $\hp (\bndry D, d\sigma)$ is a closed 
subspace of $L^p (\bndry D, d\sigma)$. Moreover the Cauchy-Szeg\H o projection $\S$ can then be defined as the
orthogonal projection of $L^2 (\bndry D, d\sigma)$ onto $\mathcal H^2(\bndry D, d\sigma)$.  
\smallskip

Second, if in addition the domain $D$ is strongly pseudo-convex, we know (by \cite{LS-5}) that $D$ supports an appropriate Cauchy integral $\Cin$ and a corresponding Cauchy transform $\Ctr$, which is a bounded operator on $L^p (\bndry D, d\sigma)$,
 $1<p<\infty$. 
 \smallskip
 
 Third, a combination of the above facts leads to the following consequences:
 
 \begin{itemize}
 \item The approximation theorem (Theorem \ref{T:3-Hardy}): the class of functions holomorphic in a neighborhood of $\bar D$ is dense
 in $\Hp (D)$, $1<p<\infty$, and correspondingly their restrictions to $\bndry D$ are dense in $\hp (\bndry D, d\sigma)$.
 \item[]
 \item The fact that when $f\in L^p(\bndry D, d\sigma)$ then $f\in \hp (\bndry D, d\sigma)$ if, and only if $f=\Ctr (f)$ (Corollary \ref{C:1-Hardy}). Related to this is the conclusion that the image of $L^p(\bndry D, d\sigma)$ under $\Ctr$ is exactly $\hp (\bndry D, d\sigma)$
  (Proposition \ref{P:1-Hardy}).
 \item[]
 \item The identities $\S\Ctr=\Ctr$ and $\Ctr\S =\S$ which hold in $L^2(\bndry D, d\sigma)$ (Proposition \ref{P:6.2.1}). These are crucial in the proof
 (given in \cite{LS-5}) of the $L^p(\bndry D, d\sigma)$-boundedness of $\S$, $1<p<\infty$.
 \item[]
 \item A further characterization of $\hp (\bndry D, d\sigma)$ as the space of those $f\in L^p(\bndry D)$ for which $f=\S (f)$
  (Proposition \ref{L:Har-Sz}).
 \end{itemize}
 \smallskip
 
 The results given in Proposition \ref{P:1-Hardy} and Corollary \ref{C:1-Hardy}, Theorem \ref{T:3-Hardy}, Propositions \ref{P:2-Hardy} and
 \ref{P:6.2.1} also hold for $C^{1,1}$ domains that are strongly $\C$-linearly convex. For these one uses the $L^p$-estimates of the Cauchy-Leray integral given in \cite{LS-4}, together with a suitable
 modification of Lemma \ref{L:aux}. These will appear in a future publication.\\

  We point out that analogous statements apply to the weighted spaces 
  $L^p(\bndry D, \omega\,d\sigma)$ (resp. $\hp (\bndry D, \omega\,d\sigma)$) whenever $\omega$ is a continuous strictly positive density on $\bndry D$:
 these weighted measures include the so-called {\em Leray-Levi measure} considered in \cite{LS-4} and \cite{LS-5}. But note that the space $L^p(\bndry D, \omega\,d\sigma)$ (resp. $\hp (\bndry D, \omega\,d\sigma)$) contains the same elements as $L^p(\bndry D, d\sigma)$ (resp. $\hp (\bndry D, \omega\,d\sigma)$) and the two norms are equivalent, and so we will continue to denote both of these spaces simply by $L^p (\bndry D)$ (resp. $\hp (\bndry D)$). However the distinction between $L^2(\bndry D, d\sigma)$ and $L^2(\bndry D, \omega\,d\sigma)$ 
 becomes relevant when defining the Cauchy-Szeg\H o projections because these spaces have different inner products that give different notions of orthogonality, and so we will distinguish between the two orthogonal projections $\S$ and $\S_\omega$. \\
 
 The structure of this paper is as follows. In Section \ref{S:-backgd} we collect the main definitions along with a few, well-known features of the spaces $\hp (\bndry D)$ for any bounded domain $D$ of class $C^2$. In Section \ref{S: Hardy-Cauchy} we restrict the focus to the strongly pseudo-convex domains and establish various  connections of 
 $\hp(\bndry D)$ with the holomorphic Cauchy integrals that were studied in \cite{LS-5}; in particular we give the proof of Proposition \ref{P:1-Hardy} and of Corollary \ref{C:1-Hardy}, and use these results to establish an operator identity that directly links our Cauchy integrals to the Cauchy-Szeg\H o projection (Proposition  \ref{P:6.2.1}).
 The approximation theorem for $\hp (\bndry D)$ is also proved 
 in Section \ref{S: Hardy-Cauchy} (Theorem \ref{T:3-Hardy}).
 In Section \ref{S:further} we give a further characterization of  $\hp(\bndry D)$ as the range of the Cauchy-Szeg\H o projection for $D$, again for $D$ strongly pseudo-convex and of class $C^2$ (Proposition \ref{L:Har-Sz}). Lastly, in the Appendix we go over the more technical tools and results that are needed to prove Theorem \ref{T:3-Hardy}.\\

 We remark that this paper complements and at the same time is complemented by the paper \cite{LS-5}, in the following sense.
 Part I of \cite{LS-5} is needed to prove the results in Section \ref{S: Hardy-Cauchy} of the present paper. On the other hand, Section \ref{S: Hardy-Cauchy} in this paper (in particular Proposition  \ref{P:6.2.1}) 
 is needed in
 Part II of \cite{LS-5}.
 There is no circularity in our proofs, as Part I of \cite{LS-5} is independent from Part II in that same paper.
    Finally, the results of Part II in \cite{LS-5} (along with Section \ref{S: Hardy-Cauchy} in this paper) lead to the proof of Proposition \ref{L:Har-Sz} in Section \ref{S:further} of the present paper.

\section{Preliminaries}\label{S:-backgd}

Here we recall the main definitions and basic features of the theory of the holomorphic Hardy spaces for a domain $D\subset\Cn$. 
While all results are stated for the induced Lebesgue measure 
$d\sigma$, they are in fact valid for any measure that is equivalent to $d\sigma$ in the sense discussed in the previous section (in particular for the aforementioned  Leray-Levi measure). Thus we will drop explicit reference to the measure and again write $L^p(\bndry D)$, $\hp(\bndry D)$ and so forth.

Most proofs are deferred to
 \cite[Sections 1-5 and 9-10]{S} and \cite{Du}, where references to the earlier literature can also be found. To simplify the exposition we limit this review to the case
  $1<p<\infty$, 
  however we point out that what is needed  for $p=\infty$ is a trivial consequence of $p<\infty$.
 
 The only assumptions on $D$ that we need to make at this stage are that $D$ is bounded and it is of class $C^2$. In fact the class $C^{1+\epsilon}$ would suffice here; the requirement that the domain $D$ is of class $C^2$ will be needed in Section \ref{S: Hardy-Cauchy} and onwards, when we deal with pseudoconvex domains.
 
 \subsection{Small perturbations of the domain}\label{SS:approx-dom}  Starting with our original domain $D$ we will need to construct a family of domains $\{D_t\}$ where $t$ is a small {\em real} parameter.
We recall that since $D$ is 
of class $C^2$ it admits a $C^2$-smooth defining function $\rho: \Cn\to\R$ such that
$$
D=\{\rho<0\},\quad \bndry D=\{\rho=0\}\quad \mbox{and}\quad \nabla\rho\neq 0\ \mbox{on}\ \bndry D.
$$

Then for each real $t$ which is small we consider 
$$
\rt = \rho +t\quad \mbox{and}\quad \Dt =\{\rt<0\},\quad \mbox{so that}\quad \bndry\Dt =\{\rt=0\}.
$$
Note that with this notation we have $D_0=D$ and furthermore $\bar{\Dt}\subset D_0$ if $t>0$, whereas $\bar{D}_0\subset D_t$ if $t<0$. 
We will also need an appropriate bijection between $\bndry D$ and $\bndry \Dt$.
 This 
is given by the exponential map of a normal vector field, which we denote $\Pt: \bndry D\to \bndry \Dt$, and whose
properties are detailed in the Appendix.

\subsection{Hardy spaces}\label{Hardy-def} When $1\leq p< \infty$ and $c$ is a small positive constant, the
 class $\Hp (D)$ is defined as the space of functions $F$ that are holomorphic in $D$ and for which \eqref{E:5.1}
 holds with $D_t$ as above and $t>0$.
(Note that 
therefore $\bar D_t\subset D$.)
We take the norm $\|F\|_{\Hp (D)}$ to be the $p$-th root of the left-hand side of the expression \eqref{E:5.1}.

We recall some basics properties of $\Hp (D)$. First, the space is independent of the specific choice of defining
function that is being used (i.e., another defining function will give an equivalent norm). Also,
one has the following fact about convergence in $\Hp (D)$:
\medskip

{\em If a sequence $\{F_k\}\subset\Hp (D)$ is uniformly bounded in the norm, and $ F_k\to F$ uniformly on compact subsets of $D$, then $F\in \Hp (D)$.}
\medskip

A key feature of
functions in $\Hp$ involves non-tangential convergence. To describe this we fix a convenient non-tangential approach region for each point $w\in\bndry D$. With $\beta>1$ fixed throughout, set 
$\Gamma (w) =\{z\in D\ :\ |z-w|<\beta\, \mbox{dist}(z, \bndry D)\}$, where dist$(z, \bndry D)$ is the usual Euclidean distance of $z\in D$ from the boundary of $D$. Then if $F\in \Hp (D)$ one knows (see \cite[Theorem 10, Section 10,  and its corollary]{S}):
\begin{equation}\label{E:5.2}
 \lim\limits_
{\stackrel{z\to w}{ z\in \Gamma (w)}}
F(z) = \dot F (w)\quad\mbox{exists for a.e.}\ w\in\bd D\, .
\end{equation}
\begin{equation}\label{E:5.2p}
 \N(F) (w) :=  \sup_
{ z\in \Gamma (w)}
|F(z)| \in L^p(\bd D)
\end{equation}
\begin{equation}\label{E:5.2pp}
\|F\|_{\Hp(D)}\ \approx\  \|\dot F\|_{L^p(\bd D)}
\  \approx\  \|\N(F)\|_{L^p(\bd D)}
\end{equation}
The above allow us to define
the Hardy space in terms of the boundary values $\dot F$ of $F\in \Hp (D)$. More precisely, the {\em Hardy space} $\hp (\bd D)$ consists of those function $f\in L^p(\bndry D, d\sigma)$ which arise as 
$$
\dot F=f\, ,\quad \mbox{for some}\quad F\in \Hp(D).
$$
If we take 
$$
\|f\|_{\hp (\bd D)} := \|f\|_{L^p (\bd D)}
$$ 
then \eqref{E:5.2pp} shows that
\begin{equation}\label{E:5.3}
\|f\|_{\hp (\bd D)}\ \approx \ \|F\|_{\Hp(D)}\, 
\end{equation}
and in this sense the spaces $\Hp (D)$ and $\hp (\bndry D)$ are identical with equivalent norms, and we will henceforth refer to either one of $\Hp$ and $\hp$ as ``the Hardy space''. 
 It follows that \eqref{E:BVP-hol} gives an alternate characterization of $\Hp$.

Connected with this is the fact that whenever $F\in \Hp(D)$, one has
\begin{equation*}
F(z) = \Pz (f)\, ,\quad \mbox{with}\quad z\in D,
\end{equation*}
where $\Pz (f)$ is the Poisson integral of $f$, that is
$$
F(z) = \Pz (f)(z)= \!\int\limits_{w\in \bd D} \mathcal P_{\!z} (w)\, f(w)\, d\sigma (w)\, .
$$
The characteristic property of the Poisson kernel $\mathcal P_{\!z} (w)$ is that it gives the solution of the Dirichlet problem for the Laplace operator for $D$ with data $f$. Namely, if $f$ is any continuous function on $\bd D$, and we set
$u(z) = \Pz (f)(z)$, then $u$ is harmonic in $D$, it extends continuously to $\bar D$ and $u(w) = f(w)$
 whenever $w\in \bd D$. More generally, if $f\in L^p(\bndry D)$, then $\dot u (w) = f(w)$ for
  $\sigma$-a.e. 
 $w\in \bd D$ and furthermore, $\|\N (u)\|_{L^p(\bd D)}\lesssim  \|f\|_{L^p(\bd D)}$. The fact that 
 \begin{equation}\label{E:Poisson-bound}
 \sup_{\stackrel{ w\in\bd D}{z\in \Dt}} \mathcal P_{\! z}(w)\leq c_t,\quad \mbox{if}\quad t> 0,
 \end{equation}
 implies that a Cauchy sequence $\{F_k\}_{k}\subset \Hp (D)$ has a subsequence that is uniformly convergent 
on any compact subset of $D$ to a function $F$, which is in fact in $\Hp (D)$, from which the completeness of $\Hp (D)$ is evident. Thus \eqref{E:5.3} shows that $\hp (\bd D)$ is a closed subspace of $L^p(\bd D)$.
  Moreover, one has the following simple characterization.
   \begin{Prop}\label{P:5.4} Suppose $f\in L^p(\bd D)$. Then $f\in \hp (\bd D)$ if, and only if, 
 $u(z)= \Pz (f)$ is holomorphic in $D$.
 \end{Prop}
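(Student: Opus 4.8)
The statement asserts the equivalence: for $f \in L^p(\bd D)$, we have $f \in \hp(\bd D)$ if and only if $u(z) = \Pz(f)$ is holomorphic in $D$. The plan is to prove the two implications separately, exploiting the material already developed in the excerpt.

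For the forward direction, suppose $f \in \hp(\bd D)$. By definition of the Hardy space, $f = \dot F$ for some $F \in \Hp(D)$, so $F$ is holomorphic in $D$ and has $f$ as its non-tangential boundary values. The excerpt records (in the discussion preceding \eqref{E:Poisson-bound}) that whenever $F \in \Hp(D)$ one has the representation $F(z) = \Pz(f)(z)$, the Poisson integral of its own boundary data. Hence $u(z) = \Pz(f)(z) = F(z)$ is holomorphic in $D$, which is exactly what is claimed.

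For the reverse direction, suppose $f \in L^p(\bd D)$ and that $u(z) = \Pz(f)$ is holomorphic in $D$. I want to produce an $F \in \Hp(D)$ with $\dot F = f$; the natural candidate is $F := u$ itself. Two things must be checked: that $F = u$ lies in $\Hp(D)$, and that its boundary values recover $f$. For the boundary values, the excerpt states that for $f \in L^p(\bd D)$ the Poisson integral satisfies $\dot u(w) = f(w)$ for $\sigma$-a.e.\ $w \in \bd D$, so the non-tangential limits of $u$ are indeed $f$. For membership in $\Hp(D)$, I would invoke the maximal-function estimate recorded just before Proposition \ref{P:5.4}, namely $\|\N(u)\|_{L^p(\bd D)} \lesssim \|f\|_{L^p(\bd D)} < \infty$; combined with the holomorphy of $u$, the characterization \eqref{E:5.2pp} (equivalently \eqref{E:BVP-hol}) shows that a holomorphic function with $\N(u) \in L^p(\bd D)$ belongs to $\Hp(D)$. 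Thus $F = u \in \Hp(D)$ with $\dot F = f$, so $f \in \hp(\bd D)$.

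The conceptual crux is recognizing that both directions reduce to facts already assembled in the preliminaries: the self-representation $F = \Pz(\dot F)$ of Hardy functions, the a.e.\ boundary recovery $\dot u = f$ for Poisson integrals of $L^p$ data, and the maximal-function bound with the equivalence of the two Hardy-space characterizations \eqref{E:BVP-hol} and \eqref{E:5.1}. The main (albeit mild) subtlety is the reverse direction, where holomorphy of $u$ is the additional hypothesis that upgrades the general harmonic Poisson integral into a genuine element of $\Hp(D)$ — without it, $u$ is merely harmonic and need not arise as the boundary data of any holomorphic function. So the real content is simply that, once $\Pz(f)$ happens to be holomorphic, the $L^p$ control of its maximal function automatically places it in the holomorphic Hardy space.
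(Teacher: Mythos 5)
Your proof is correct and follows essentially the same route as the paper's: the forward direction uses the representation $F=\Pz(\dot F)$ (which the paper re-derives on the spot via a.e.\ boundary recovery plus uniqueness of the Dirichlet problem, whereas you cite it directly from the preliminaries), and the reverse direction combines holomorphy of $\Pz(f)$ with the maximal-function estimate $\|\N(u)\|_{L^p(\bd D)}\lesssim\|f\|_{L^p(\bd D)}$ and the equivalence \eqref{E:5.2pp}, exactly as in the paper. No gaps.
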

 Indeed, if $f\in \hp (\bd D)$ then there is $F\in \Hp (D)$ such that $\dot F=f$. But one also has
 $\dot \Pz(f) =f$, and we conclude that $\Pz(f) = F(z), z\in D$, by the uniqueness of the solution of the Dirichlet problem for harmonic functions with data  $f\in L^p(\bndry D)$.
  Conversely, if $\Pz(f)$ is holomorphic in $D$, then by the aforementioned estimates for the solution of the Dirichlet problem with data $f\in L^p(\bndry D)$
    and by the equivalence 
  \eqref{E:5.2pp},
  we have that $\Pz(f)\in \Hp (D)$ and furthermore, that $\dot \Pz (f)= f$, 
  showing that $f\in \hp (\bndry D)$ with $F(z):=\Pz (f)$, $z\in D$.
 \medskip
 
 Proposition \ref{P:5.4} has the following immediate consequence which of course, is interesting only when $p_2>p_1$.
  \begin{Cor}\label{C:5.5} If $f\in \mathcal H^{p_1}(\bndry D)\cap L^{p_2}(\bndry D)$ then $f\in \mathcal H^{p_2}(\bndry D)$.
 \end{Cor}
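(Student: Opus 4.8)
The plan is to apply Proposition~\ref{P:5.4} twice, in opposite directions, exploiting the fact that the Poisson integral $\Pz(f)$ is a single function of $z\in D$ that does not depend on which Lebesgue class we regard $f$ as belonging to. Since $\bndry D$ has finite measure and the interesting case is $p_2>p_1$, the hypothesis $f\in L^{p_2}(\bndry D)$ already guarantees $f\in L^{p_1}(\bndry D)$, so the two Poisson integrals are defined and coincide pointwise. Consequently the holomorphicity of $u(z)=\Pz(f)$ is an intrinsic property of the function $u$, independent of the exponent one uses to detect it.

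Concretely, I would first use the hypothesis $f\in\mathcal H^{p_1}(\bndry D)$: by the forward implication of Proposition~\ref{P:5.4} (with $p=p_1$), the Poisson integral $u(z)=\Pz(f)$ is holomorphic in $D$. I would then invoke the remaining hypothesis $f\in L^{p_2}(\bndry D)$ together with the converse implication of Proposition~\ref{P:5.4} (now with $p=p_2$): since $\Pz(f)$ is holomorphic in $D$ and the boundary datum $f$ lies in $L^{p_2}(\bndry D)$, we conclude $f\in\mathcal H^{p_2}(\bndry D)$, which is the assertion.

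The only point requiring a moment's care---and it is what makes the statement a genuine consequence of Proposition~\ref{P:5.4} rather than a tautology---is that Proposition~\ref{P:5.4} presupposes membership in $L^p(\bndry D)$ before asserting equivalence with holomorphicity of the Poisson integral. Thus membership in $\mathcal H^{p_1}(\bndry D)$ supplies the holomorphicity but not, by itself, $p_2$-integrability; conversely mere membership in $L^{p_2}(\bndry D)$ supplies integrability but not holomorphicity. The corollary simply records that these two ingredients combine, the single harmonic (in fact holomorphic) function $u=\Pz(f)$ witnessing membership in $\mathcal H^{p_2}(\bndry D)$ once its boundary values are known to be $p_2$-integrable. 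I therefore anticipate no substantive obstacle: the proof is essentially the bookkeeping above.
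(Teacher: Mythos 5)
Your proof is correct and is essentially identical to the paper's own argument: both apply Proposition~\ref{P:5.4} first with $p=p_1$ to obtain holomorphicity of $\Pz(f)$, and then in the converse direction with $p=p_2$ to conclude $f\in\mathcal H^{p_2}(\bndry D)$.
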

 Indeed, first note that $u(z)= \Pz (f)$ is holomorphic in $D$ by Proposition \ref{P:5.4} (because $f\in \mathcal H^{p_1}(\bndry D)$). From this we conclude that $f\in \mathcal H^{p_2}(\bndry D)$ again by Proposition \ref{P:5.4} (because $f\in L^{p_2}(\bndry D)$ and $\Pz (f)$ is holomorphic in $D$).

 \section{The role of the Cauchy integral}\label{S: Hardy-Cauchy} In this section we come to terms with the main issue that arises in the context of several complex variables, namely, the fact that 
there is no canonical, holomorphic Cauchy kernel for $D\subset \Cn$ when $n\geq 2$.
 For this reason we need to 
 impose
  the additional restriction that our domain be strongly pseudo-convex and of class $C^2$,
   so we may apply the results of 
   \cite[Part I]{LS-5}, were the existence (and explicit construction) of a family of holomorphic Cauchy
  kernels is established, along with  $L^p$- and H\"older- regularity properties of the resulting integral operators.

 \subsection{Holomorphic Cauchy integrals}\label{SS:backgr-Cauchy} 
 The proofs of the statements in this section can be found in Part I of \cite{LS-5}. Here we briefly recall the main ideas and ingredients in the proofs.

$\bullet$\quad  Taking $\rho: \Cn\to \R$ to be a strictly plurisubharmonic defining function of $D$,
 one begins by constructing a family of locally holomorphic kernels, denoted $\{C^1_\epsilon (w, z)\}_\epsilon$, by applying the Cauchy-Fantappie' theory, see \cite{LS-3}, to a perturbation of the Levi polynomial of $D$ in which the second derivatives of $\rho$ (which are only continuous functions of $w\in\bar D$) are replaced by a smooth approximation $\tau_\epsilon$. One then achieves global holomorphicity by adding to each such kernel the solution  of an ad-hoc $\deebar$-problem in the $z$-variable (for fixed $w$ in a neighborhood of $\bndry D$), whose data is defined in a (strongly pseudo-convex and smooth) neighborhood $\Omega$ of $\bar D$; we denote such solution $C^2_\epsilon (w, z)$. The outcome of this procedure is a 
 family of globally holomorphic kernels $\{C_\epsilon (w, z)\}_\epsilon$:
    \begin{equation}\label{E:3.3ab}
 C_\epsilon (w, z)= C^1_\epsilon (w, z) +C^2_\epsilon (w, z),\quad 0<\epsilon<\epsilon_0
\end{equation}
which are holomorphic in $z\in D$ whenever $w$ is in $\bndry D$.
 More precisely, each of the $C_\epsilon (w, z)$'s is defined in terms of a denominator $\ge^{-n}(w, z)$, where $\ge (w,z)$ is a holomorphic function of $z\in D$ (and of class $C^1$ in $w\in \Omega$) that satisfies the following
  inequalities uniformly in $0<\epsilon<\epsilon_0$:
 \begin{equation}\label{E:2.4}
  \Re \ge (w, z) \geq c'(-\r (z) + |w-z|^2),\quad \mbox{for}\ z\in\bar D,\ w\in\bndry D\, ,
  \end{equation}
  and
 \begin{equation}\label{E:2.4'}
  \Re \ge (w, z) \geq c'(\r (w)-\r (z) + |w-z|^2)
   \end{equation}
  for $z$ and $w$ in a neighborhood of $\bndry D$.  
 We denote  the resulting integral operators by $\Cine$, that is
  \begin{equation*}
  \Cine f(z) = \int\limits_{w\in\bndry D}\!\!\!\! f(w)\, C_\epsilon (w, z)\, ,\quad z\in D.
  \end{equation*}
   From now on we are only interested in the properties of these operators
  for a fixed (small) value of $\epsilon$.
   Thus we will 
  drop explicit reference to $\epsilon$ and will write $\Cin$ for  $\Cin_\epsilon$, 
 $C(w, z)$ for $C_\epsilon (w, z)$, $\g(w, z)$ for $\ge (w, z)$ and so forth.
 
The key properties of $\Cin$ are summarized in propositions \ref{P:4} and \ref{P:5} below.

   \begin{Prop}\label{P:4}\quad
\begin{itemize}
\item[(1)]\quad Whenever $f$ is integrable, $\Cin (f) (z)$ is holomorphic for $z\in D$.
\item[]
\item[(2)]\quad  If $F$ is continuous in $\bar D$ and holomorphic in $D$ and 
\begin{equation*}
f=F\bigg|_{\bndry D}\, ,
\end{equation*}
\quad then\ \  $\Cin (f) (z) = F(z), \ z\in D.$
\end{itemize}
\end{Prop}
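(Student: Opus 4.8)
For statement (1) the plan is to differentiate under the integral sign. Fix a compact set $K\subset D$; for $z\in K$ one has $-\r(z)\geq\delta_K>0$, so the lower bound \eqref{E:2.4} gives $|\g(w,z)|\geq\Re\g(w,z)\geq c'\delta_K$ uniformly in $w\in\bd D$. Hence the kernel $C(w,z)$, whose denominator is $\g^{-n}(w,z)$, together with all of its $z$-derivatives, is continuous and bounded on $\bd D\times K$. Since $f$ is integrable, dominated convergence lets us move $\dee_z$ and $\deebar_z$ past the integral; as each $C(\cdot,z)$ is holomorphic in $z$ we obtain $\deebar_z\Cin(f)\equiv 0$ on $K$, and hence on all of $D$. (Equivalently one can argue by Morera's theorem, interchanging the boundary integral with a contour integral of $C(w,\cdot)$ that vanishes by the holomorphicity of the kernel.)

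Statement (2) is the reproducing property of the Cauchy integral, and the plan is to establish it by comparison with the Bochner--Martinelli kernel $\mathcal B(w,z)$. Since $\mathcal B$ is universal---it depends only on $w-z$---and reproduces holomorphic functions that are continuous up to the boundary, one has, for every $F\in C(\bar D)\cap\mathcal O(D)$ and every $z\in D$, the identity $\int_{\bd D}F(w)\,\mathcal B(w,z)=F(z)$; this is obtained by applying the Bochner--Martinelli formula on the shrunken domains $\Dt$ (on neighborhoods of whose closures $F$ is genuinely holomorphic) and letting $t\to 0^+$, using the continuity of $F$ on $\bar D$ and dominated convergence. It therefore suffices to prove that
\[
\int\limits_{w\in\bd D}\!\! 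F(w)\,\bigl(C(w,z)-\mathcal B(w,z)\bigr)=0\qquad\text{for}\ \ F\in C(\bar D)\cap\mathcal O(D).
\]

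For this last step I would invoke the Cauchy--Fantappi\`e--Leray machinery behind the construction of $C$ in \cite{LS-5}. The local kernel $C^1$ is the Cauchy--Fantappi\`e kernel of the regularized Levi generating form $\eta$, normalized by $\langle\eta(w,z),\,w-z\rangle\equiv 1$; the difference of two normalized Cauchy--Fantappi\`e kernels is exact in the $w$-variables, so that by Stokes' theorem its integral over $\bd D$ against a function holomorphic in $D$ vanishes. The globalizing term $C^2$, produced by solving $\deebar_z C^2=-\deebar_z C^1$ with data on the neighborhood $\Omega$ of $\bar D$, must then be shown to contribute nothing to the boundary pairing, i.e. $\int_{\bd D}F(w)\,C^2(w,z)=0$ for holomorphic $F$. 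I expect this to be the main obstacle: it is exactly the point at which the \emph{global} holomorphicity of $C=C^1+C^2$ must be reconciled with the reproducing property of the raw Cauchy--Fantappi\`e kernel, and it is the technical heart of \cite{LS-5}. By contrast, the Bochner--Martinelli reproduction, the interior exhaustion in $t$, and the differentiation under the integral in part (1) are routine once the uniform bounds \eqref{E:2.4}--\eqref{E:2.4'} are in hand; note moreover that this route is independent of the approximation theorem (Theorem \ref{T:3-Hardy}), so no circularity is introduced.
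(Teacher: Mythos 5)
Your proposal is correct and follows essentially the same route as the paper, which in fact never proves Proposition \ref{P:4} internally: Section \ref{SS:backgr-Cauchy} states that ``the proofs of the statements in this section can be found in Part I of \cite{LS-5},'' and the closest in-paper argument --- the proof of Proposition \ref{P:aux-Hardy}, which contains Proposition \ref{P:4} as the case $t=0$ --- consists of exactly the ingredients you describe: $\CkerI$ is a normalized Cauchy--Fantappi\`e kernel (whose reproducing property rests on the Bochner--Martinelli/Stokes comparison you outline), and $\CkerII$ is the $\deebar_z$-correction imported from \cite{LS-5}. The step you single out as the main obstacle, namely that $\int_{\bd D}F(w)\,\CkerII(w,z)=0$ for $F\in\mathcal O(D)\cap C(\bar D)$, is precisely the ingredient the paper also outsources to \cite{LS-5}, so your attempt is no less self-contained than the paper's own treatment, and your part (1) is the routine argument both would use.
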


$\bullet$\quad An important feature of the Levi polynomial of $D$ is that it
determines a quasi-distance
 function $\d (w, z)$, defined for $w, z\in \bndry D$, that exhibits border-line integrability: 
\begin{equation}\label{E:2.16}
\int\limits_{w\in \B_r(z)}\!\!\!\!\!\! \d(w, z)^{-2n +\beta} d\sigma (w) \leq c_\beta\, r^\beta; 
\!\!\!\!\!
\int\limits_{w\in \bndry D\setminus \B_r(z)}\!\!\!\!\!\!\!\!\! \d(w, z)^{-2n -\beta} d\sigma (w) \leq c_\beta\, r^{-\beta} 
\end{equation}
for $0<r<1$ and $\beta >0$, where $\B_r(z) =\{w\in\bndry D\ |\ \d(w, z)<r\}$.\\

Then one has the following extension result (proved in Part I of \cite{LS-5}):
 \begin{Prop}\label{P:5}
If $f$ satisfies the H\"older-type condition:
\begin{equation}\label{E:3.4}
 |f(w)-f(z)|\leq c\,\d(w, z)^\alpha,\quad w, z\in \bndry D.
 \end{equation}
 then $\Cin (f)$ extends to a continuous function
on $\bar D$.
\end{Prop}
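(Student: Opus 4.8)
\emph{Proof plan.} The plan is to localize the problem near an arbitrary boundary point $z_0\in\bndry D$ and to exploit the fact that $\Cin$ reproduces constants. Applying Proposition \ref{P:4}(2) to $F\equiv 1$ gives $\Cin(1)(z)\equiv 1$, i.e. $\int_{\bndry D}C(w,z)=1$ for every $z\in D$; hence for $z\in D$ I would write
\[
\Cin(f)(z)=f(z_0)+\int\limits_{w\in\bndry D}\big(f(w)-f(z_0)\big)\,C(w,z).
\]
By \eqref{E:3.4}, $f$ is continuous on $\bndry D$, and the weight $\d(w,z_0)^\alpha$ supplied by \eqref{E:3.4} turns the borderline singularity of $C$ into an integrable one: since $|f(w)-f(z_0)|\le c\,\d(w,z_0)^\alpha$ and (see below) $|C(w,z_0)|\lesssim\d(w,z_0)^{-2n}\,d\sigma(w)$, the first inequality in \eqref{E:2.16} with $\beta=\alpha>0$ shows that
\[
J(z_0):=\int\limits_{w\in\bndry D}\big(f(w)-f(z_0)\big)\,C(w,z_0)
\]
is absolutely convergent. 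Thus it suffices to prove that the integral in the first display converges to $J(z_0)$ as $z\to z_0$ in $D$: the continuous extension of $\Cin(f)$ will then be realized, with boundary value $z_0\mapsto f(z_0)+J(z_0)$.

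Next I would separate the two pieces $C=\CkerI+\CkerII$ of \eqref{E:3.3ab}. For fixed $w$ the correction $\CkerII(w,\cdot)$ solves a $\deebar$-problem whose data is smooth on the neighborhood $\Omega\supset\bar D$, so it is continuous on $\bar D$ jointly in $(w,z)$; consequently $z\mapsto\int_{\bndry D}(f(w)-f(z_0))\,\CkerII(w,z)$ is already continuous up to $\bndry D$, and all the difficulty lies in the singular part $\CkerI$. For the latter the Cauchy--Fantappi\`e construction of \cite{LS-5} provides the size bound $|\CkerI(w,z)|\lesssim|\g(w,z)|^{-n}\,d\sigma(w)$, while \eqref{E:2.4} (recall $\r(z)<0$ on $D$) gives $|\g(w,z)|\ge\Re\g(w,z)\ge c'\big(|\r(z)|+|w-z|^2\big)$, and on the boundary $|\g(w,z_0)|\approx\d(w,z_0)^2$; these are the estimates behind the size bound used in the previous paragraph.

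The core is then a non-isotropic near/far splitting of $\int(f(w)-f(z_0))\,\CkerI(w,z)$. For a small fixed $\delta>0$ I would split $\bndry D$ into $\B_\delta(z_0)$ and its complement. On the far set $\{\d(w,z_0)\ge\delta\}$ the kernel $\CkerI(w,z)$ is bounded and depends continuously on $z$ up to $z_0$, so that part converges to its value at $z_0$. On $\B_\delta(z_0)$ one combines $|f(w)-f(z_0)|\lesssim\d(w,z_0)^\alpha$ with the size bound for $\CkerI$: if $z\to z_0$ \emph{non-tangentially}, so that $|\r(z)|\approx|z-z_0|$, then $|\g(w,z)|\gtrsim\d(w,z_0)^2$ uniformly for $w\in\B_\delta(z_0)$, the integrand is dominated by $c\,\d(w,z_0)^{\alpha-2n}$, and \eqref{E:2.16} together with dominated convergence gives the limit $J(z_0)$ and shows the near contribution is $O(\delta^\alpha)$.

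The step I expect to be the main obstacle is the \emph{tangential} approach $z\to z_0$. There the pointwise comparison $|\g(w,z)|\gtrsim\d(w,z_0)^2$ breaks down for boundary points $w$ near the projection of $z$, and a decomposition into non-isotropic dyadic annuli shows that the purely size-based estimate for the near contribution degrades by a factor $\log\big(\d(z,z_0)/|\r(z)|^{1/2}\big)$, which is not controlled as $z\to z_0$ tangentially. Eliminating this logarithm is precisely where one must use the cancellation encoded in the Cauchy--Fantappi\`e numerator rather than the mere size $|\g|^{-n}$. Concretely, I would upgrade the crude bound on $\CkerI$, by means of the refined derivative estimates for $\g$ from Part I of \cite{LS-5}, to a kernel-difference inequality of the form
\[
|\CkerI(w,z_1)-\CkerI(w,z_2)|\lesssim\frac{|z_1-z_2|^{\gamma}}{\d(w,\zeta)^{2n+\gamma}}\,d\sigma(w),\qquad \zeta\in\bndry D\ \text{near}\ z_1,z_2,
\]
for a suitable $\gamma\in(0,1]$. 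Inserting this into the same near/far scheme (now centered at a common boundary point $\zeta$ close to $z_1$ and $z_2$) and using \eqref{E:3.4} and \eqref{E:2.16} would yield a genuine H\"older estimate $|\Cin(f)(z_1)-\Cin(f)(z_2)|\lesssim|z_1-z_2|^{\alpha'}$ on $D$. Since $D$ is bounded, such an estimate forces $\Cin(f)$ to be uniformly continuous on $D$, hence to extend continuously to $\bar D$ irrespective of the mode of approach. Quantifying the cancellation in this kernel-difference inequality is the delicate point.
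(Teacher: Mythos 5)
Your setup is fine as far as it goes: the reproduction of constants, the subtraction of $f(z_0)$, the absolute convergence of $J(z_0)$ via \eqref{E:3.4} and \eqref{E:2.16}, the splitting $\Cker=\CkerI+\CkerII$ with the correction term continuous up to $\bar D$, and the far/near decomposition are all correct, and your treatment of the non-tangential approach is sound. But the proof stops exactly where the content of Proposition \ref{P:5} begins. Continuous extension to $\bar D$ requires the \emph{unrestricted} limit at every boundary point (your own reduction needs this; pointwise non-tangential limits of a function on $D$ do not yield a continuous extension), and for the tangential approach you concede that the size-based estimate degrades by an uncontrolled logarithm. The proposed repair, the kernel-difference inequality, is conjectured rather than proved -- you yourself call quantifying it ``the delicate point'' -- so this is a genuine gap, not a technicality. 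Note also that, as written, the inequality has the wrong homogeneity: with a Euclidean gain $|z_1-z_2|^\gamma$ in the numerator, the parabolic scaling of $\d$ (namely $|w-z|\lesssim \d(w,z)\lesssim |w-z|^{1/2}$) forces a denominator $\d(w,\zeta)^{2n+2\gamma}$, valid only where $\d(w,\zeta)\gtrsim |z_1-z_2|^{1/2}$. For the record, the paper itself does not prove this proposition; it is imported from Part I of \cite{LS-5}, so the comparison here is with the argument given there.

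The standard way to close the gap -- and the one the estimates quoted from \cite{LS-5} are designed for -- avoids your obstacle altogether by centering the cancellation at the boundary projection $\hat z$ of the \emph{moving} point $z$, rather than at the fixed target point $z_0$: then the H\"older gain $\d(w,\hat z)^\alpha$ sits on top of the kernel's singularity, which lives at $\hat z$ and not at $z_0$, and the logarithm never appears. Concretely, differentiating the identity $\Cin(1)\equiv 1$ in $z$ gives $\int_{\bndry D}\nabla_z \Cker(w,z)=0$, hence
\begin{equation*}
\nabla \Cin(f)(z)=\int\limits_{w\in\bndry D}\big(f(w)-f(\hat z)\big)\,\nabla_z \Cker(w,z)\, ,
\end{equation*}
and the comparison $|\g(w,z)|\approx |\g(w,\hat z)|+\dist(z,\bndry D)$ along the normal (this is \cite[Corollary 2]{LS-5}, the same estimate invoked in the paper's Appendix in the proof of Lemma \ref{L:aux}) combined with $|\nabla_z\Cker(w,z)|\lesssim |\g(w,z)|^{-n-1}$ and \eqref{E:2.16} yields
\begin{equation*}
|\nabla \Cin(f)(z)|\ \lesssim\ \dist(z,\bndry D)^{\frac{\alpha}{2}-1}\, .
\end{equation*}
The Hardy--Littlewood lemma then gives a uniform H\"older-$\tfrac{\alpha}{2}$ bound for $\Cin(f)$ on $D$, hence a continuous (indeed H\"older) extension to $\bar D$, with no tangential/non-tangential case distinction and no kernel-difference inequality: the cancellation you were trying to quantify is entirely absorbed by the reproduction of constants. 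If you prefer your difference-estimate route, it can be made to work, but then proving the difference inequality (with the corrected homogeneity) is the whole proof, and it cannot be left as a black box.
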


It follows from this proposition that one may define the Cauchy transform $\Ctre$ as the
restriction of $\Cin$ to the functions on $\bndry D$ that satisfy the H\"older-like condition \eqref{E:3.4}, that is
 \begin{equation*}
\Ctre (f) = \Cin (f)\big|_{\bndry D}\quad \mbox{for}\ f\ \mbox{as in  \eqref{E:3.4}.}
\end{equation*}
Note that with the notation of Section \ref{SS:approx-dom} we have
\begin{equation*}
\Ctre (f) = \dot\Cin (f)\, .
\end{equation*}
The Cauchy transforms have the following regularity properties (proved in Part I of \cite{LS-5}).
\begin{Thm}\label{T:1}
The operator $\Ctre$ initially defined for functions satisfying \eqref{E:3.4} extends to a bounded linear transformation on $L^p(\bndry D, d\sigma)$, for $1<p<\infty$.
\end{Thm}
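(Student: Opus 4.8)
The plan is to realize $\Ctre$ as a Calder\'on--Zygmund singular integral operator on the space of homogeneous type $(\bndry D, \d, d\sigma)$ and to prove its $L^p$-boundedness by first establishing boundedness on $L^2(\bndry D, d\sigma)$ and then extrapolating. The borderline integrability estimates \eqref{E:2.16} show that the $\d$-balls satisfy $\sigma(\B_r(z)) \approx r^{2n}$, so $(\bndry D, \d, d\sigma)$ is a space of homogeneous type of homogeneous dimension $2n$, and $\d^{-2n}$ is precisely the non-integrable diagonal singularity one expects for such a kernel.

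First I would record the pointwise size and regularity estimates for the boundary kernel. Restricting $C(w,z)$ to $z \in \bndry D$ produces, in the principal-value sense, a kernel $K(w,z)$; using the lower bounds \eqref{E:2.4}--\eqref{E:2.4'} on $\Re \g(w,z)$ to control the denominator $|\g(w,z)|^{-n}$ from below (so that $|\g(w,z)| \approx \d(w,z)^2$ on the boundary), together with the boundedness of the numerator, I would obtain the size estimate $|K(w,z)| \lesssim \d(w,z)^{-2n}$. Differentiating the kernel once --- which is all the $C^2$-regularity of $\bndry D$ (equivalently the $C^1$-dependence of $\g$ on $w$) permits --- and again estimating the resulting denominators via \eqref{E:2.4}--\eqref{E:2.4'}, I would establish a H\"ormander-type smoothness condition $|K(w,z) - K(w',z)| \lesssim \d(w,w')^\gamma\,\d(w,z)^{-2n-\gamma}$ for some $\gamma \in (0,1]$ whenever $\d(w,z) \geq c\,\d(w,w')$, and symmetrically in the other variable.

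Next comes the $L^2$-boundedness, the heart of the matter, for which the natural tool is a $T(1)$-theorem on spaces of homogeneous type. A decisive simplification is available on one side: since the constant function $1$ is holomorphic and continuous on $\bar D$, Proposition \ref{P:4}(2) gives $\Cin(1) \equiv 1$, hence $\Ctre(1) = 1 \in \mathrm{BMO}(\bndry D)$. The matching condition $\Ctre^*(1) \in \mathrm{BMO}$ does not come for free, and establishing it --- together with the weak boundedness property --- is where the main effort lies. I would attack this by exploiting the Cauchy--Fantappi\`e structure of the kernel: the holomorphicity in $z$ supplies the cancellation needed to control the pairing of $\Ctre$ against normalized bumps, while the globalizing term $C^2_\epsilon$, being the solution of a $\deebar$-problem with smooth data on a neighborhood of $\bar D$, contributes an operator whose kernel is nonsingular and hence trivially bounded on every $L^p$. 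The remaining principal piece, built from the perturbed Levi polynomial $C^1_\epsilon$, is the one demanding the careful $T(1)$-verification.

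Finally, once $L^2$-boundedness and the H\"ormander conditions are in hand, the Calder\'on--Zygmund machinery on spaces of homogeneous type yields the weak-type $(1,1)$ bound, whence Marcinkiewicz interpolation gives $L^p$ for $1 < p \leq 2$; for $2 \leq p < \infty$ one dualizes, noting that the transpose of $\Ctre$ has a kernel satisfying the same size and smoothness estimates. I expect the main obstacle to be the verification of the cancellation conditions ($\Ctre^*(1) \in \mathrm{BMO}$ and the weak boundedness property) at the low regularity imposed by a merely $C^2$ boundary: the kernel is only once differentiable in $w$, the usual smooth symbolic calculus is unavailable, and so the anisotropic $\d$-geometry must be used quantitatively at every stage.
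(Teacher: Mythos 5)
First, a point of order: the paper under review does not prove Theorem \ref{T:1} at all. It is imported from Part I of \cite{LS-5} (where it is Theorem 7; the present paper says explicitly that the proofs of this section are ``proved in Part I of \cite{LS-5}''), so your proposal can only be measured against that cited argument. In outline you have reconstructed it correctly: the proof there, like that of \cite{LS-4} for the Cauchy--Leray integral, runs through Calder\'on--Zygmund theory on the space of homogeneous type $(\bndry D, \d, d\sigma)$, obtains the $L^2$ statement from a $T(1)$-type theorem whose first testing condition comes from the reproducing property $\Cin(1)\equiv 1$ of Proposition \ref{P:4}, disposes of the $\deebar$-correction $C^2_\epsilon$ as an operator with bounded kernel, and concludes by the weak-$(1,1)$/interpolation/duality scheme.

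The gaps are in the two steps you treat as routine and as ``the main effort,'' respectively. The first is a step that would fail: you assert that the $C^2$ smoothness of $\bndry D$ permits differentiating the kernel once in $w$. It does not. The Cauchy--Fantappi\`e numerator of $C^1_\epsilon$, namely $G\wedge(\deebar_w G)^{n-1}$, contains $\deebar_w\partial\rho(w)$ --- genuine second derivatives of $\rho$, which for a $C^2$ domain are merely continuous; the smoothing $\tau_\epsilon$ repairs the denominator $\ge$ but cannot be substituted inside $\deebar_w G$ without destroying the reproducing identity. Hence the full kernel satisfies no H\"ormander condition in $w$, and your route to the Calder\'on--Zygmund estimates collapses at the outset. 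The missing idea is the preliminary decomposition carried out in \cite{LS-5}: the terms with non-differentiable coefficients either carry a factor of $(w-z)$, producing a remainder kernel $O(\d(w,z)^{-2n+1})$ that is bounded on every $L^p$ by \eqref{E:2.16} and Schur's test with no smoothness needed, or can be frozen into a continuous strictly positive density (the Leray--Levi density). Only the resulting essential kernel $\ge(w,z)^{-n}$, built solely from $\partial\rho\in C^1$ and $\tau_\epsilon\in C^\infty$, is a Calder\'on--Zygmund kernel. A consequence you should note: the reproducing property then tests the essential operator against the density rather than against $1$, so the $L^2$ step is really a $Tb$ argument (equivalently, $T(1)$ relative to the Leray--Levi measure); your plan applies Proposition \ref{P:4} to the full transform but would have to apply the $T(1)$ theorem to a different operator, and never reconciles the two.

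Second, for the transpose condition, which you rightly identify as the crux, the proposal offers no mechanism beyond ``holomorphicity in $z$ supplies the cancellation.'' The natural device --- Kerzman--Stein symmetrization as in \cite{KS}, i.e. $\ge(w,z)=\overline{\ge(z,w)}+O(|w-z|^2)$ --- genuinely fails at this regularity: the error kernel $|w-z|^2\,|\ge(w,z)|^{-n-1}$ is borderline, because on the annulus $\d(w,z)\approx\delta$ one has $|w-z|\approx\delta$ on a set of measure $\approx\delta^{2n}$, so every dyadic scale contributes a fixed amount and the sum over scales diverges logarithmically. The additional gain that makes symmetrization work for $C^3$-or-smoother domains is precisely what minimal smoothness removes; supplying a substitute is the real content of the theorem, and it is absent from the proposal.
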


 \begin{Prop}\label{P:13}
For any $0 < \alpha < 1$, the transform $\Ctre: f\mapsto \Ctr (f)$  preserves the space of H\" older-like functions satisfying condition \eqref{E:3.4}.
\end{Prop}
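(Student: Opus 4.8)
The plan is to exploit the reproducing property of $\Cin$ in order to recast $\Ctr$ in a form where the H\"older regularity of $f$ can be used directly. Applying Proposition~\ref{P:4}(2) to the constant function $F\equiv 1$ gives $\Cin(1)(z)=1$ for $z\in D$, hence $\Ctr(1)=1$ on $\bndry D$. Subtracting $f(z)$ inside the integral then yields
\begin{equation*}
\Ctr f(z)=f(z)+Tf(z),\qquad Tf(z):=\int\limits_{w\in\bndry D}\!\![f(w)-f(z)]\,C(w,z).
\end{equation*}
The bound \eqref{E:3.4}, together with the size estimate $|C(w,z)|\lesssim \d(w,z)^{-2n}\,d\sigma(w)$ (a consequence of the lower bounds \eqref{E:2.4}--\eqref{E:2.4'} on $\Re\g$) and the first inequality in \eqref{E:2.16} with $\beta=\alpha$, shows that this integral converges absolutely. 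Since $f$ itself satisfies \eqref{E:3.4}, it suffices to prove that $Tf$ does.

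Next I would fix $z_1,z_2\in\bndry D$, set $\delta=\d(z_1,z_2)$, choose a large constant $A$, and split the domain of integration for $Tf(z_1)-Tf(z_2)$ into the ``near'' ball $B=\B_{A\delta}(z_1)$ and its complement. On $B$ I estimate the two integrals $\int_B[f(w)-f(z_i)]\,C(w,z_i)$, $i=1,2$, separately: each integrand is dominated by $\d(w,z_i)^{-2n+\alpha}\,d\sigma(w)$, and the first inequality in \eqref{E:2.16} (after noting that $B\subset\B_{A'\delta}(z_i)$ for a suitable $A'$, by the quasi-triangle inequality for $\d$) bounds each by $\lesssim\delta^\alpha$.

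On the complement $\bndry D\setminus B$ I would write the integrand as
\begin{equation*}
[f(w)-f(z_1)]\big(C(w,z_1)-C(w,z_2)\big)+[f(z_2)-f(z_1)]\,C(w,z_2).
\end{equation*}
For the first piece I use a H\"ormander-type smoothness estimate $|C(w,z_1)-C(w,z_2)|\lesssim \delta\,\d(w,z_1)^{-2n-1}\,d\sigma(w)$, valid because $\d(w,z_1)\geq A\delta$ forces $\d(w,z_2)\approx\d(w,z_1)$; combined with \eqref{E:3.4} and the second inequality in \eqref{E:2.16} with $\beta=1-\alpha>0$, this contributes $\lesssim\delta\cdot(A\delta)^{-(1-\alpha)}\lesssim\delta^\alpha$. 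For the second piece one has $|f(z_2)-f(z_1)|\lesssim\delta^\alpha$, so it only remains to bound $\big|\int_{\bndry D\setminus B}C(w,z_2)\big|$ uniformly in $z_2$ and $\delta$. Collecting the three contributions gives $|Tf(z_1)-Tf(z_2)|\lesssim \d(z_1,z_2)^\alpha$, which is the assertion.

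I expect the main obstacle to be exactly that last term, and more generally the passage from size to smoothness of the kernel. First, the estimate on $C(w,z_1)-C(w,z_2)$ in the $z$-variable must be recovered from the explicit Cauchy--Fantappi\`e form of $C$ and the quantitative lower bounds \eqref{E:2.4}--\eqref{E:2.4'}, since $C(w,z)$ is only known to be holomorphic in $z\in D$ and its boundary modulus of continuity is not given a priori. Second, and most delicately, the uniform boundedness of the truncated integral $\int_{\bndry D\setminus B}C(w,z_2)$ cannot be obtained from the size estimate alone: the exponent $-2n$ is only borderline integrable with respect to $\d$ (cf. the restriction $\beta>0$ in \eqref{E:2.16}), so one must use the cancellation of the kernel, extracting the bound from the reproducing identity $\int_{\bndry D}C(w,\cdot)=1$ together with control of the truncation near the singularity. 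This is the step where the borderline nature of the quasi-distance $\d$ genuinely enters, and it is what allows the exponent $\alpha$ to be preserved rather than merely some smaller exponent.
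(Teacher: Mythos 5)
You should know at the outset that this paper does not prove Proposition \ref{P:13}: it is quoted from Part I of \cite{LS-5}, and the only related ingredient appearing here is the representation \eqref{E:int-post-chg-a}, which is exactly the identity you derive from $\Ctr(1)=1$. So your starting point and your Calder\'on--Zygmund skeleton (near/far splitting relative to $\delta=\d(z_1,z_2)$, kernel smoothness on the far set) are the natural ones. Nevertheless the proposal has a genuine gap --- in fact two, and they are precisely the two steps you flag as ``obstacles''; flagging them does not discharge them, and neither can be filled by the tools you actually invoke. Take first the smoothness estimate $|C(w,z_1)-C(w,z_2)|\lesssim\delta\,\d(w,z_1)^{-2n-1}$. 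The bounds \eqref{E:2.4}--\eqref{E:2.4'} are one-sided lower bounds and give no modulus of continuity at all, while the $C^1$-dependence of $\g$ on $z$ gives, via the mean value theorem, only $|C(w,z_1)-C(w,z_2)|\lesssim|z_1-z_2|\,\d(w,z_1)^{-2n-2}$. Since $|z_1-z_2|$ can be comparable to $\delta$ (complex-tangential separation), this yields the bound $\delta\,\d(w,z_1)^{-2n-2}$, whose far integral is $\lesssim\delta\,(A\delta)^{-(2-\alpha)}\approx\delta^{\alpha-1}$: useless. The exponent $-2n-1$ you need encodes a second-order cancellation: one must prove $|\g(w,z_1)-\g(w,z_2)|\lesssim\delta^2+\delta\,\d(w,z_1)$, which rests on the Levi-polynomial structure of $\g$ --- in particular on $\langle\dee\rho(z_1),z_1-z_2\rangle=\g(z_1,z_2)+O(|z_1-z_2|^2)=O(\delta^2)$ --- i.e.\ on exactly the kind of expansion carried out in the Appendix for a different purpose (cf.\ \eqref{E:nuw} in the proof of Lemma \ref{L:aux}). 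Your parenthetical justification, that $\d(w,z_1)\geq A\delta$ forces $\d(w,z_2)\approx\d(w,z_1)$, only lets you exchange $z_1$ and $z_2$ in a denominator; it proves nothing about the size of the difference.

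The second gap is the term $[f(z_2)-f(z_1)]\int_{\bndry D\setminus \B}C(w,z_2)$. You rightly observe that the size estimate cannot bound this truncated integral (the exponent $-2n$ is borderline in \eqref{E:2.16}), but the remedy you gesture at --- the reproducing identity ``together with control of the truncation near the singularity'' --- is circular: writing $\int_{\bndry D\setminus\B}C(w,z_2)=1-\int_{\B}C(w,z_2)$ merely transfers the problem to $\int_{\B}C(w,z_2)$, which is itself borderline divergent. A uniform bound on truncations of the bare kernel is a substantive cancellation property; for these kernels it has to be extracted from the explicit Cauchy--Fantappi\`e structure of $C^1(w,z)$ (Stokes-type arguments), or else the argument must be reorganized so that no bare-kernel truncation ever appears. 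The latter is how this paper itself handles the analogous borderline situation in the proof of Theorem \ref{T:3-Hardy}: there every term in the decomposition retains a factor $f(w)-f(z)$, so only the integrable size $\d(w,z)^{-2n+\alpha}$ is ever used; and the classical route to H\"older estimates on strongly pseudoconvex domains likewise bypasses truncations by passing to the interior --- bounding $\nabla_z\Cin f$ by means of the differentiated cancellation $\int_{\bndry D}\nabla_z C(w,z)=0$ for $z\in D$, then concluding with a Hardy--Littlewood-type lemma adapted to $\d$. Until these two estimates are actually established, what you have is a correct plan, not a proof.
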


 \subsection{Stability under small perturbations of the domain} We should note that when $|t|$ is small,  the approximating domains $\Dt$ that were defined in Section \ref{SS:approx-dom}, are strongly pseudo-convex as a consequence of the assumption that $D$ is strongly pseudo-convex.
    It turns out that the Cauchy kernel $\Cker(w, z)$ that was introduced in the previous section
     for the original domain $D$ works as well,
   {\em mutatis mutandis}, for the domains $\{\Dt\}_t$. More precisely, we have:
   
  \begin{Prop}\label{P:aux-Hardy}
   If $|t|$ is sufficiently small, the kernel $\Cker(w,z)$
   given by \eqref{E:3.3ab}
  has a natural extension to $z\in \Dt$ and $\zeta\in\bd\Dt$. 
    The corresponding  Cauchy  integral operator $\Cint$ for $\Dt$, defined as 
    \begin{equation*}
    \Cint (\tilde f) (z) = \int\limits_{\zeta\in\bndry D_t}\!\!\! \tilde f(\zeta)\, C(\zeta, z),\quad z\in D_t,
    \end{equation*}
    satisfies  the following properties:
    \begin{itemize}
\item[(1)]\quad Whenever $\tilde f$ is integrable on $\bndry D_t$, $\Cint (\tilde f) (z)$ is holomorphic

 for $z\in D_t$.
\item[]
\item[(2)]\quad  If $\tilde F$ is continuous in $\bar D_t$ and holomorphic in $D_t$ and 
\begin{equation*}
\tilde f=\tilde F\bigg|_{\bndry D_t}\, ,
\end{equation*}
\quad then\ \  $\Cint (\tilde f) (z) = \tilde F(z), \ z\in D_t.$
\end{itemize}
   \end{Prop}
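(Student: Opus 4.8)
The plan is to transfer each of the two assertions from the original domain $D$ to the perturbed domains $D_t$, using the fact that the kernel $C(w,z)$ was built from data (the strictly plurisubharmonic defining function $\rho$, the Levi polynomial, the $\deebar$-solution on a fixed neighborhood $\Omega$ of $\bar D$) that is insensitive to the small shift $\rho\mapsto\rho_t=\rho+t$. Since $\bar D_t\subset\Omega$ for $|t|$ small and $\bd D_t=\{\rho_t=0\}$ is strongly pseudo-convex, the construction of Section \ref{SS:backgr-Cauchy} applies verbatim: the holomorphic function $g(w,z)$ in the denominator $g^{-n}(w,z)$ remains holomorphic in $z$, and the estimates \eqref{E:2.4} and \eqref{E:2.4'} continue to hold (with $\rho$ replaced by $\rho_t$) for $z,w$ near $\bd D_t$, uniformly in $t$. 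This gives the ``natural extension'' claimed in the first sentence, and reduces the proof to checking properties (1) and (2) for $\Cint$.

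For property (1), I would argue exactly as in Proposition \ref{P:4}(1). The integrand $\tilde f(\zeta)\,C(\zeta,z)$ is, for each fixed $\zeta\in\bd D_t$, holomorphic in $z\in D_t$ because $g(\zeta,z)$ is holomorphic in $z$ and does not vanish there (by \eqref{E:2.4} applied to $D_t$, since $-\rho_t(z)>0$ for $z\in D_t$). Holomorphicity of $\Cint(\tilde f)$ then follows by differentiating under the integral sign: for $\tilde f$ integrable on $\bd D_t$ and $z$ in a compact subset of $D_t$, the kernel and its $z$-derivatives are uniformly bounded (the denominator is bounded away from zero on such compacta), so Morera's theorem or direct differentiation applies.

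Property (2) is the reproducing formula, and this is where I expect the main work to lie. For the original domain one proves $\Cin(f)=F$ on $D$ by a Cauchy--Fantappiè / Stokes argument: the Cauchy--Fantappiè form integrates to reproduce holomorphic functions that are continuous up to the boundary. The cleanest route here is to invoke Proposition \ref{P:4}(2) for a suitable fixed reference domain and then deform. Concretely, I would fix $z\in D_t$ and observe that $\tilde F$ is holomorphic on $D_t\supset\bar D_{t'}$ for $t'$ slightly larger than $t$ (when $t>0$), or shrink appropriately; then apply the reproducing property on the smaller strongly pseudo-convex domain where $\tilde F$ is continuous up to the closure, and pass to the limit as the inner domain exhausts $D_t$, using the boundary regularity of $\Pt$ and the uniform kernel estimates to justify convergence of the boundary integrals. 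Alternatively, and more directly, one repeats the Cauchy--Fantappiè derivation of Proposition \ref{P:4}(2) with $D$ replaced by $D_t$: the only inputs are that $C(\zeta,z)$ is a Cauchy--Fantappiè kernel associated to a generating form that is holomorphic in $z$ and satisfies the non-vanishing condition \eqref{E:2.4} on $\bar D_t$, both of which hold uniformly in $t$.

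\textbf{Main obstacle.} The delicate point is not the holomorphicity but the verification that the \emph{same} generating form underlying $C(\zeta,z)$ remains a valid Cauchy--Fantappiè section for $D_t$ — i.e.\ that the $\deebar$-corrected kernel $C^2_\epsilon$, whose construction used a $\deebar$-solution operator on the fixed neighborhood $\Omega$ of $\bar D$, still yields a \emph{globally} holomorphic-in-$z$ kernel on the shifted domain $D_t$, and that the defining estimates transfer with constants uniform in $t$. Once the uniform estimates \eqref{E:2.4}--\eqref{E:2.4'} are established for $\rho_t$ (a routine consequence of the $C^2$-smoothness of $\rho$ and the compactness of the parameter range $|t|<c$), properties (1) and (2) follow by the arguments already used for $D$ itself.
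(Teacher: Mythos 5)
Your ``alternatively, and more directly'' route is the paper's actual proof, and your diagnosis of the main obstacle is accurate: the issue is whether the $\deebar$-corrected part $\CkerII$, constructed for $z$ in the fixed neighborhood $\O$ of $\bar D$, still serves the shifted domain. The paper resolves this by invoking \cite[Lemma 7]{LS-3} and \cite[Proposition 3.2]{LS-2} to produce a single smooth strongly pseudo-convex $\O$ containing $\bar\Dt$ for all small $|t|$, on which $H(\zeta,z)=-\deebar_z\CkerI(\zeta,z)$ remains smooth in $z\in\O$ and continuous in $\zeta\in\bd\Dt$; consequently $\Cker=\CkerI+\CkerII$ for $\Dt$ is literally the same kernel as for $D$ (independent of $t$), and conclusions (1) and (2) then follow from the Cauchy--Fantappi\`e theory exactly as in the case $t=0$.

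Two points in your write-up need repair, though. First, your ``cleanest route'' for property (2) is circular: ``applying the reproducing property on the smaller strongly pseudo-convex domain'' $D_{t'}$ is precisely conclusion (2) of the proposition for the parameter $t'$, which is not yet known; and Proposition \ref{P:4}(2) cannot be substituted for it, because that statement integrates over $\bd D$ only and requires holomorphy on all of $D$ --- which $\tilde F$ lacks when $t>0$, while for $t<0$ the points $z\in\Dt\setminus\bar D$ are outside its reach. Only your second route survives. Second, the transfer of the denominator estimate is not a fresh uniformity-in-$t$ argument to be re-proved from smoothness and compactness: it is an immediate restriction of \eqref{E:2.4'}, which is exactly why that inequality was recorded for $z,w$ in a full neighborhood of $\bd D$ rather than only for $w\in\bd D$. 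Since $\rho\equiv-t$ on $\bd\Dt$, inequality \eqref{E:2.4'} with $\zeta\in\bd\Dt$ and $z\in\bar\Dt$ reads $\Re\g(\zeta,z)\geq c(-\rt(z)+|\zeta-z|^2)$, i.e.\ the exact analogue of \eqref{E:2.4} for $\Dt$; moreover, replacing $\rho$ by $\rt$ does not change $\g$ at all, so no new constants or estimates enter. This last observation also shows why your one-line justification of (1) (holomorphy of the integrand because ``$\g\neq0$'') is too quick on its own: $\CkerI$ is holomorphic in $z$ only near the diagonal, and holomorphy of $\Cker(\zeta,\cdot)$ on all of $\Dt$ --- which for $t<0$ is strictly larger than $D$ --- is exactly what the $\deebar$-correction on $\O$ provides.
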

 Note that this proposition includes the result for the Cauchy integral of $D$ (Proposition \ref{P:4})
   as the case $t=0$.
\begin{proof}To construct  the extension of the kernel
we begin by noting that $\g (w, z)$
still satisfies the inequality analogous to \eqref{E:2.4} for $z\in \bar\Dt$, $\zeta\in\bd\Dt$, when $\rho$ is replaced by $\rt = \rho +t$. Namely,  we have by \eqref{E:2.4'}  that
 $$
 \Re \big(\g (\zeta, z)\big)\geq c(-\rt (z) + |\zeta-z|^2),\quad \mbox{when}\quad z\in \bar\Dt,\ \zeta\in\bd\Dt\, .
 $$
 (Note that using $\rt$ in place of $\rho$ does not change the definition of $\g$.) Hence if we take
 $$
 \CintI (\tilde f)(z) = \int\limits_{\zeta\in\bd\Dt}\!\!\!\! \tilde f(\zeta)\, \CkerI(\zeta, z)\,,\quad z\in\Dt\, ,
 $$
 where
 $$
 \CkerI(\zeta, z)= \frac{G (\zeta, z)\!\wedge\! (\deebar G(\zeta, z))^{n-1}}{\g (\zeta, z)^n}
 $$
 then $\CintI$ is a Cauchy-Fantappi\`e integral for $\Dt$ whose kernel is holomorphic for $z\in\Dt$ close to $\zeta\in\bd\Dt$. Next, as pointed out in e.g., \cite[Lemma 7]{LS-3} and \cite[Proposition 3.2]{LS-2}, there is a smooth, strongly pseudo-convex domain $\O$ that contains $\bar\Dt$ for $|t|$ sufficiently small, with the property that  $H(\zeta, z):= -\deebar_z\CkerI (\zeta, z)$ 
 is smooth when $z\in\O$, and is continuous in $\zeta\in\bd\Dt$. We may thus consider the correction operator $\CinII$ and its kernel $\CkerII (\zeta, z)$ 
as  described in \cite[Part I]{LS-5}
  and references therein, however now for 
 $z\in\O$ and $\zeta\in\bd \Dt$, so that 
 $$
 \deebar_z\CkerII (\zeta, z) = -\CkerI(\zeta, z)\quad \mbox{whenever}\quad z\in \O\, ,\ \zeta\in\bd \Dt\, ,
 $$
 and set 
 $$
 \CintII (\tilde f)(z) =\int\limits_{\zeta\in\bd \Dt}\!\!\!\! \tilde f(\zeta)\, \CkerII (\zeta, z),\quad \mbox{for}\ z\in \Dt \, .
 $$
Then if
   $
  \Cint (\tilde f) = \CintI (\tilde f) + \CintII (\tilde f) \, ,
  $
  we have
  $$
  \Cint (\tilde f) (z) =\int\limits_{\bndry \Dt} \!\!\tilde f(\zeta)\,\Cker (\zeta, z),\quad z\in\Dt
  $$
  where 
  $\Cker (\zeta, z)=\CkerI (\zeta,z) +\CkerII (\zeta, z)$ 
  is in fact the kernel of $\Cin$ (the Cauchy integral for the domain $D$ that was defined in Section 
  \ref{SS:backgr-Cauchy}) and so
  it is independent of $t$.
  It should be clear from the above that  (1) and  (2) holds.
   Namely, $C(\zeta, z)$ is holomorphic in $z\in\Dt$ for any $\zeta\in\bd \Dt$; and 
  if $\tilde F$ is holomorphic in $\Dt$ and continuous in $\bar\Dt$, we have that
 \begin{equation} \label{E:aux-Hardy}
  \tilde F(z) =\int\limits_{w\in\bndry \Dt}\!\!\!\! \tilde f(\zeta)\,\Cker(\zeta, z)\, ,\quad \mbox{for}\quad z\in\Dt\, \quad
  \mbox{and}\ |t|\ \mbox{small},
  \end{equation}
  where
   $$
\tilde f= F\big|_{\bd \Dt}\, .
 $$
 \end{proof}

\subsection{Hardy spaces and the Cauchy Integral} The following proposition gives the first link between the 
 Cauchy integral and $\Hp (D)$.
  \begin{Prop}\label{P:1-Hardy}
 Suppose $f\in L^p(\bd D, d\sigma)$, $1<p<\infty$, and let $F(z) = \Cin f (z)$, $z\in D$. 
 Then, $F\in \Hp (D)$ and 
 $$
 \|F\|_{\Hp (D)}\lesssim \|f\|_{L^p(\bd D, d\sigma)}.
 $$
 Moroever, we have that $\Ctr (f)\in\hp (\bd D, d\sigma)$\, .
 \end{Prop}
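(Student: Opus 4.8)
The plan is to first show that $F=\Cin f$ lies in $\Hp(D)$ with the stated norm bound, and then to extract the ``moreover'' statement about $\Ctr(f)$ from this together with a density argument. That $F$ is holomorphic in $D$ is immediate from Proposition \ref{P:4}(1), since $\bd D$ is compact and hence $f\in L^p(\bd D)\subset L^1(\bd D)$. To control the Hardy norm I would estimate directly the defining quantity \eqref{E:5.1}, that is $\sup_{0<t<c}\int_{\bd\Dt}|F|^p\,d\sigma_t$. Using the bijection $\Pt:\bd D\to\bd\Dt$ of Section \ref{SS:approx-dom}, whose Jacobian relating $d\sigma_t$ to $d\sigma$ is bounded above and below uniformly for small $t$ (see the Appendix), this reduces to a bound, \emph{uniform} in $0<t<c$, for the family of operators
\[
T_t f(\xi)=\Cin f(\Pt(\xi))=\int\limits_{w\in\bd D} f(w)\,\Cker(w,\Pt(\xi)),\qquad \xi\in\bd D,
\]
viewed as maps $L^p(\bd D)\to L^p(\bd D)$. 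Thus the goal becomes $\|T_t\|_{L^p\to L^p}\lesssim 1$ uniformly in $t$.

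Second, I would establish this uniform bound from kernel estimates already at hand. Since $\rho(\Pt(\xi))=-t$ for $\xi\in\bd D$, inequality \eqref{E:2.4} gives $\Re\g(w,\Pt(\xi))\ge c'(t+|w-\Pt(\xi)|^2)$, so the denominator $\g(w,\Pt(\xi))^{-n}$ of $\Cker(w,\Pt(\xi))$ is controlled exactly as in the boundary case $t=0$, the additional term $+t$ only improving matters. Consequently $\Cker(w,\Pt(\xi))$ is dominated, uniformly in $t$, by the same borderline quasi-distance bounds \eqref{E:2.16} that underlie the $L^p$ boundedness of $\Ctre$ in Theorem \ref{T:1}; the smooth correction $\CkerII$ contributes a uniformly smooth kernel and is harmless, so all the singular behavior sits in $\CkerI$. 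For each fixed $t>0$ the operator $T_t$ has a bounded kernel and is trivially bounded; the real issue is that the bound must be uniform as $t\to0^+$, when the singularity forms on $\bd D$. This uniformity, obtained from \eqref{E:2.16} by the Calder\'on--Zygmund machinery already used to prove Theorem \ref{T:1}, is precisely the content of a suitable modification of Lemma \ref{L:aux}, and it is the main obstacle of the proof. Combined with the comparability of $d\sigma_t$ and $d\sigma$ under $\Pt$, it yields $\sup_{0<t<c}\int_{\bd\Dt}|F|^p\,d\sigma_t\lesssim\|f\|_{L^p(\bd D)}^p$, i.e. $\|F\|_{\Hp(D)}\lesssim\|f\|_{L^p(\bd D)}$.

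Finally, for the ``moreover'' statement, once $F\in\Hp(D)$ the limit \eqref{E:5.2} produces a non-tangential boundary value $\dot F$, which belongs to $\hp(\bd D)$ by the very definition of that space. It remains to identify $\dot F$ with $\Ctr(f)$. For $f$ satisfying the H\"older-type condition \eqref{E:3.4} this identity is the definition of $\Ctre$ together with Proposition \ref{P:5}. For general $f\in L^p(\bd D)$ I would approximate by H\"older functions $f_k\to f$ in $L^p(\bd D)$: Theorem \ref{T:1} gives $\Ctr(f_k)\to\Ctr(f)$ in $L^p(\bd D)$, while the norm bound just proved, applied to $f-f_k$, together with \eqref{E:5.3}, gives $\dot\Cin(f_k)\to\dot\Cin(f)=\dot F$ in $L^p(\bd D)$. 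Since $\dot\Cin(f_k)=\Ctr(f_k)$ for each $k$, passing to the limit yields $\Ctr(f)=\dot F\in\hp(\bd D)$, as required.

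As an alternative to the sup-integral in the first step, one could bound the non-tangential maximal function $\N(F)$ pointwise by a maximal operator of $f$ built from the quasi-distance balls $\B_r$, and then invoke the equivalence \eqref{E:5.2pp} and the characterization \eqref{E:BVP-hol} of $\Hp(D)$. The pointwise kernel estimate required is again \eqref{E:2.4}, and the essential difficulty—controlling the kernel uniformly as the point of evaluation approaches $\bd D$—is identical to the one described above.
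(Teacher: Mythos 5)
Your reduction of the Hardy-norm bound to a \emph{uniform} $L^p(\bd D)\to L^p(\bd D)$ bound for the family $T_t f(\xi)=\Cin f(\Pt(\xi))$ is a correct reformulation of the statement, but the step where you claim this uniform bound is a genuine gap, and it is the whole content of the proposition. Your justification rests on the size estimates \eqref{E:2.4} and \eqref{E:2.16}, arguing that the extra term $+t$ in $\Re\g(w,\Pt(\xi))\gtrsim t+|w-\Pt(\xi)|^2$ ``only improves matters.'' Size estimates alone, however, cannot give uniformity: since the quasi-balls satisfy $\sigma(\B_r(\xi))\approx r^{2n}$, the pointwise bound $|\Cker(w,\Pt(\xi))|\lesssim \bigl(t+\d(w,\xi)^2\bigr)^{-n}$ yields
\begin{equation*}
\int\limits_{w\in\bd D}\bigl(t+\d(w,\xi)^2\bigr)^{-n}\,d\sigma(w)\ \approx\ \log(1/t),
\end{equation*}
so the absolute-value kernel of $T_t$ has $L^1$ norm blowing up like $\log(1/t)$, and a Schur-type argument gives at best $\|T_t\|_{L^p\to L^p}\lesssim\log(1/t)$. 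Uniformity therefore requires exploiting cancellation, i.e.\ comparing $T_t$ with the boundary operator $\Ctr$ itself (plus maximal-function errors) — which amounts to redoing the harmonic-analysis work of \cite{LS-5}, not to quoting it. Your appeal to ``a suitable modification of Lemma \ref{L:aux}'' does not fill this hole either: that lemma compares the sizes of the denominators $\g(\Pt(w),z)$ and $\g(w,z)$, which is again a size statement, not an operator bound; in the paper it is used only to control a difference of kernels against a H\"older modulus in the proof of Theorem \ref{T:3-Hardy}.

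The paper's proof sidesteps this difficulty entirely by a soft argument which you should compare with: approximate $f$ in $L^p(\bd D)$ by $C^1$ functions $g_k$; then $F_k=\Cin(g_k)$ is holomorphic in $D$ and continuous on $\bar D$ (Proposition \ref{P:5}), hence lies in $\Hp(D)$, and the classical equivalence \eqref{E:5.2pp} for an \emph{individual} holomorphic function with nice boundary values gives $\|F_k\|_{\Hp(D)}\approx\|\Ctr(g_k)\|_{L^p(\bd D)}\lesssim\|g_k\|_{L^p(\bd D)}$ by Theorem \ref{T:1}. The uniform-in-$t$ control you are trying to prove at the operator level is thus absorbed into \eqref{E:5.2pp}, a known result from \cite{S}, applied to each $F_k$; completeness of $\Hp(D)$ (via \eqref{E:Poisson-bound}) and closedness of $\hp(\bd D)$ in $L^p(\bd D)$ then produce the limit $F=\Cin f$ and the conclusion $\Ctr(f)\in\hp(\bd D)$. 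Your third step (identifying $\dot F$ with $\Ctr(f)$ by density) is fine, but it presupposes the norm bound, so as written the proposal does not constitute a proof.
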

 \begin{proof}
 We let $\{g_k\}_k$ be a sequence of smooth (say $C^1$) functions so that $g_k\to f$ in the $L^p(\bndry D, d\sigma)$-norm. Let
 $$
 F_k := \Cin (g_k)\, .
 $$
 Then by Proposition \ref{P:5}, 
  $F_k$ is holomorphic in $D$ and continuous on $\bar D$, and 
 $$
 \dot F_k = F_k\big|_{\bd D} = \Ctr (g_k)\, ,
 $$
 by the definition of the transform $\Ctr$. So $\{F_k\}_k\subset \Hp (D)$, and 
 $$
 \|F_k-F_j\|_{\Hp (D)} \lesssim \|\Ctr(g_k-g_j)\|_{L^p(\bndry D)}
 \lesssim
 \|g_k-g_j\|_{L^p(\bndry D)}\, ,
 $$
 with the first inequality due to \eqref{E:5.2pp}, and the second due to the $L^p$-boundedness of $\Ctr$ (\cite[Theorem 7]{LS-5}).
  This shows that $\{F_k\}_k$ is a Cauchy sequence in $\Hp (D)$ and it follows from \eqref{E:Poisson-bound} and the comments thereafter that $\{F_k\}_k$ 
  has a 
  subsequence (which we keep denoting $\{F_k\}$) that converges uniformly on compact subsets of $D$ and therefore in $\Hp (D)$,  to a limit $F$ which is thus in $\Hp (D)$. Since as we have seen
  $\|F_k\|_{\Hp (D)}\lesssim \|g_k\|_{L^(\bd D)}$, this yields the first assertion.
  
  The fact that $\Ctr(g_k)\in\hp (\bd D, d\sigma)$ (recall that $\Ctr(g_k) =\dot F_k$ with $F_k\in\Hp (D)$), and the continuity of $\Ctr$ in the $L^p(\bd D, d\sigma)$-norm (\cite[Theorem 7]{LS-5}), then shows that 
  $$
  \Ctr (f) = \lim\limits_{k\to\infty}\Ctr (g_k)\in\hp (\bd D, d\sigma)\, .
  $$
  This proves the second assertion, completing the proof of the proposition.
   \end{proof}
The operator $\Cint$ that appeared in Proposition \ref{P:aux-Hardy}
will be used in the proofs of the next two results. Specifically, the situation 
when $t>0$ (for which $\bar \Dt\subset D$) arises in the proof of Proposition \ref{P:2-Hardy} below, whereas
the case $t<0$ (for which $\bar D\subset \Dt$) will occur in the proof of 
Theorem \ref{T:3-Hardy} in the next section.

 \begin{Prop}\label{P:2-Hardy} 
 Suppose $F\in \Hp (D)$ and $f=\dot F$. Then, 
 $$
 F (z) =\Cin (f)(z),\quad z\in D.
 $$
 \end{Prop}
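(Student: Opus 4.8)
The plan is to exhaust $D$ from inside by the domains $\Dt$ with $t>0$, apply the reproducing formula of Proposition \ref{P:aux-Hardy}(2) on each $\Dt$, and then pass to the limit $t\to 0^+$. First I would fix $z\in D$. For all sufficiently small $t>0$ we have $z\in\Dt$ and $\bar\Dt\subset D$, so $F$ is holomorphic in a neighborhood of $\bar\Dt$, and in particular continuous on $\bar\Dt$. Applying Proposition \ref{P:aux-Hardy}(2) to the domain $\Dt$, with $\tilde F=F\big|_{\bar\Dt}$, therefore gives
$$
F(z) = \int\limits_{\zeta\in\bd\Dt} F(\zeta)\, C(\zeta, z)\,,\qquad z\in\Dt\,.
$$
Next I would transport this integral to $\bd D$ by means of the bijection $\Pt:\bd D\to\bd\Dt$ from Section \ref{SS:approx-dom}, whose regularity is recorded in the Appendix: writing $\zeta=\Pt(w)$ and letting $J_t(w)$ denote the corresponding surface-measure Jacobian, the identity becomes
$$
F(z) = \int\limits_{w\in\bd D} F(\Pt(w))\, C(\Pt(w), z)\, J_t(w)\, d\sigma(w)\,.
$$

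The heart of the proof is the passage $t\to 0^+$ inside this integral, which I would carry out by dominated convergence. For the pointwise convergence of the integrand I would use three ingredients: (i) $\Pt(w)\to w$ as $t\to 0^+$, and this approach takes place along the normal line and hence within the non-tangential region $\Gamma(w)$ (a property of the exponential normal map $\Pt$ established in the Appendix), so that $F(\Pt(w))\to\dot F(w)=f(w)$ for a.e. $w\in\bd D$ by the non-tangential limit \eqref{E:5.2}; (ii) the kernel $w\mapsto C(w,z)$ is continuous, whence $C(\Pt(w),z)\to C(w,z)$; and (iii) $J_t(w)\to 1$ uniformly as $\Pt\to\mathrm{id}$.

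For the dominating function I would note that, since $z\in D$ is fixed and interior, the lower bound \eqref{E:2.4'} gives, at the boundary point $\Pt(w)\in\bd\Dt$ where $\rho(\Pt(w))=-t$,
$$
\Re\,\g(\Pt(w), z) \geq c'\big(-t-\rho(z)+|\Pt(w)-z|^2\big) \geq c'\big(-\tfrac12\rho(z)\big)>0
$$
for all small $t>0$, uniformly in $w$; consequently the kernel $C(\Pt(w),z)$ is bounded by a constant $M=M(z)$ independent of $w$ and of small $t$. Combining this with the uniform bound $J_t\leq 2$ and with the estimate $|F(\Pt(w))|\leq\N(F)(w)$ (valid precisely because $\Pt(w)\in\Gamma(w)$), the integrand is majorized by $2M\,\N(F)(w)$, which lies in $L^p(\bd D)\subset L^1(\bd D)$ by \eqref{E:5.2p} and the finiteness of $\sigma(\bd D)$. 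Dominated convergence then yields
$$
F(z)=\int\limits_{w\in\bd D} f(w)\, C(w,z)\, d\sigma(w)=\Cin(f)(z)\,,
$$
as desired.

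I expect the main obstacle to be the verification that $\Pt(w)$ approaches $w$ \emph{non-tangentially}, i.e. that $\Pt(w)\in\Gamma(w)$ for all small $t>0$: this is exactly what licenses both replacing $F(\Pt(w))$ by its non-tangential boundary value in step (i) and bounding it by $\N(F)$ in the majorization. This, together with the limiting behavior of the Jacobian $J_t$, is the content of the properties of the exponential normal map deferred to the Appendix; everything else—the boundedness of the kernel for fixed interior $z$ and the $L^p$-control provided by $\N(F)$—is comparatively routine.
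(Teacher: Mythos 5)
Your proposal is correct and follows essentially the same route as the paper's own proof: the reproducing formula of Proposition \ref{P:aux-Hardy}(2) on the inner domains $\Dt$ ($t>0$), transport to $\bd D$ via $\Pt$ and the change of variables \eqref{E:change-1}, then dominated convergence using the non-tangential limit \eqref{E:5.2} and the maximal function $\N(F)$ as majorant. Your explicit verification of the kernel bound for fixed interior $z$ via \eqref{E:2.4'} is a detail the paper leaves implicit (it simply invokes continuity of the kernel coefficients near $\bar D$), but the argument is the same.
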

  The assertion above is an elaboration of \cite[Proposition 5]{LS-5}, in which $F$ was taken to be in the subspace of $\Hp (D)$ consisting of the functions that are holomorphic in $D$ and continuous on $\bar D$.
 \begin{proof}
  For $t>0$ we consider the Cauchy integral $\Cint$ for the region $\Dt$ that was defined
 in Section \ref{SS:approx-dom} and then pass to the limit as $t \to 0$.
  
  To this end, first note that by \eqref{E:aux-Hardy},
    for any fixed $z\in D$ we have 
    \begin{equation*}\label{E:5.5}
  F(z) = \int\limits_{\bd \Dt} \!\! F (\zeta)\, \Cker (\zeta, z)\quad \mbox{for}\ \  t>0\quad \mbox{and sufficiently small}
  \end{equation*}
   because $F$ is holomorphic in $\Dt$ and continuous on $\bar\Dt$.
  We may now use the bijection $\Pt$: $\bndry D\to\bndry D_t$ that was described in Section \ref{SS:approx-dom} to 
    express the identity 
  above in the equivalent form
  \begin{equation}\label{E:5.5}
  F(z) = \int\limits_{\bd D} \!\! F (\Pt (w)) J_t(w)\,\Cker (\Pt (w), z)\, 
  \end{equation}
  via a corresponding formulation of the change of variables formula \eqref{E:change-1} (given in the Appendix below).
   Since $J_t\to 1$ uniformly on $\bndry D$, and the coefficients of $\Cker (w, z)$
   are continuous in $w$ in a neighborhood of $\bar D$, 
     then $J_t(w)\Cker (\Pt (w), z)$
  converges to $\Cker (w, z)$ as $t\to 0$, uniformly in $w\in\bd D$;
  moreover 
  the convergence of $\Pt (w)$ to $w$ is non-tangential. 
  Thus \eqref{E:5.2} and the dominated convergence via the maximal function $\N(F)$ shows that the integral
  in \eqref{E:5.5} converges to
  $$
  \int\limits_{\bd D}\!\!\dot F(w)\,\Cker (w, z)\, =\Cin (f) (z).
  $$
  The proposition is therefore proved.
 \end{proof}

\subsection{Density properties of $\hp (\bndry D)$.}
There are two realizations of $\hp (\bndry D)$ that follow from the previous results for $\Cin$ and $\Ctr$.
The first is in fact
 a basic approximation
 of $\hp (\bd D)$. We define $\Ht$  to consist of all
 functions $f$ that arise as restrictions to $\bd D$ of functions $F$ that are holomorphic in some neighborhood of $\bar D$ (which need not be fixed and may, in fact, depend on $F$).
 \begin{Thm}\label{T:3-Hardy}
 For each $p$, $1<p<\infty$, we have that $\Ht$ is dense in $\hp (\bd D)$. 
 \end{Thm}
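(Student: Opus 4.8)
The plan is to realize each $f\in\hp(\bd D)$ as an $L^p$-limit of boundary values of functions holomorphic in a full neighborhood of $\bar D$, and the device for producing such functions is the Cauchy integral $\Cint$ for the \emph{enlarged} domains $\Dt$ with $t<0$, for which $\bar D\subset\Dt$ (Proposition~\ref{P:aux-Hardy}). For $t<0$ small I would set
\[
T_t f \;:=\; \Cint\bigl(f\circ\Pt^{-1}\bigr)\big|_{\bd D},
\]
where $\Pt\colon\bd D\to\bd\Dt$ is the bijection of Section~\ref{SS:approx-dom} and $f\circ\Pt^{-1}$ is the transported datum on $\bd\Dt$. By part~(1) of Proposition~\ref{P:aux-Hardy} the function $\Cint(f\circ\Pt^{-1})$ is holomorphic in $\Dt$; since $t<0$ gives $\bar D\subset\Dt$, it is in fact holomorphic in a neighborhood of $\bar D$, so $T_tf\in\Ht$. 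Everything therefore reduces to showing that $T_tf\to f$ in $L^p(\bd D)$ as $t\to0^-$.

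To do this I would combine three ingredients in the familiar $\epsilon/3$ fashion. First, the identity $\Ctr f=f$ for every $f\in\hp(\bd D)$: writing $f=\dot F$ with $F\in\Hp(D)$ and choosing Hölder functions $g_k\to f$ in $L^p$, the argument of Proposition~\ref{P:1-Hardy} gives $\Cin(g_k)\to\Cin(f)$ in $\Hp(D)$, while Proposition~\ref{P:2-Hardy} identifies $\Cin(f)=F$; passing to boundary values via \eqref{E:5.3} yields $\Ctr(g_k)=\dot{\Cin(g_k)}\to\dot F=f$, and since also $\Ctr(g_k)\to\Ctr f$ by Theorem~\ref{T:1}, we conclude $\Ctr f=f$. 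Second, a uniform bound $\sup_{t<0}\|T_t\|_{L^p\to L^p}<\infty$ for $t$ small. Third, convergence on a dense class: for $g$ satisfying the Hölder condition \eqref{E:3.4}, $T_t g\to\Ctr g$ in $L^p(\bd D)$. Granting these, for $f\in\hp$ and Hölder $g$ one has
\[
\|T_t f-f\|_{L^p}=\|T_t f-\Ctr f\|_{L^p}\le\|T_t\|\,\|f-g\|_{L^p}+\|T_t g-\Ctr g\|_{L^p}+\|\Ctr\|\,\|g-f\|_{L^p},
\]
and letting first $t\to0^-$ and then $g\to f$ in $L^p$ forces $\|T_tf-f\|_{L^p}\to0$; since each $T_tf\in\Ht$, this establishes the density.

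The convergence $T_t g\to\Ctr g$ for Hölder $g$ I expect to read off from the stability of the Cauchy integral under the small perturbation $\Pt$. Using the boundary form of the change of variables \eqref{E:change-1}, one writes $T_tg(z)=\int_{\bd D}g(w)\,J_t(w)\,C(\Pt(w),z)$ for $z\in\bd D$, to be compared with $\Ctr g(z)=\dot{\Cin g}(z)$. Since $J_t\to1$ and $\Pt(w)\to w$, while $C(\Pt(w),z)$ stays holomorphic in $z$ near $\bar D$ and, for each fixed $t<0$, non-singular on $\bd D$ (because $\zeta=\Pt(w)\in\bd\Dt$ forces $\rho(\zeta)>0$ in \eqref{E:2.4'}), I expect $\Cint(g\circ\Pt^{-1})\to\Cin(g)$ uniformly on $\bar D$, hence $T_tg\to\Ctr g$ in $L^p$; the boundary continuity of $\Cin g$ supplied by Proposition~\ref{P:5} is precisely what makes the limit identifiable as $\Ctr g$.

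The hard part is the pair of \emph{uniform} statements: the uniform $L^p$-boundedness of $\{T_t\}$ and the uniformity of the convergence just described. As $t\to0^-$ the contour $\bd\Dt$ collapses onto $\bd D$ while the evaluation points $z\in\bd D$ approach it, so the near-diagonal singularity of $C(\zeta,z)$ — whose size is controlled only by the positive but vanishing quantity $\rho(\zeta)$ in \eqref{E:2.4'} — must be dominated uniformly in $t$. This is exactly the type of analysis carried out in \cite{LS-5}, resting on the border-line integrability \eqref{E:2.16} of the quasi-distance; I would obtain the required uniform kernel estimates from a perturbed version of Lemma~\ref{L:aux} together with the properties of $\Pt$ recorded in the Appendix. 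These uniform estimates are the technical crux on which the whole argument depends.
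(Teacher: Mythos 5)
Your main device---transporting the datum to $\bd\Dt$ for $t<0$ and applying the Cauchy integral $\Cint$ of the enlarged domain, so that the resulting function is holomorphic in a neighborhood of $\bar D$---is exactly the paper's, and your third ingredient ($T_tg\to\Ctr g$ uniformly for H\"older $g$, via the reproducing property of $\Cint$, Lemma~\ref{L:aux} and \eqref{E:2.16}) is exactly the estimate the paper carries out. The genuine gap is your second ingredient: the uniform bound $\sup_{t<0}\|T_t\|_{L^p\to L^p}<\infty$. You never prove it, and it is not a routine consequence of anything you quote. For each fixed $t<0$ the kernel $C(\Pt(w),z)$, $w,z\in\bd D$, is non-singular, so $T_t$ is trivially bounded, but with a constant that a priori blows up as the contour collapses onto $\bd D$; making this uniform in $t$ is a Calder\'on--Zygmund problem of essentially the same depth as the $L^p$-boundedness of $\Ctr$ itself (\cite[Theorem 7]{LS-5}), now for a whole family of shifted kernels, and a ``perturbed version of Lemma~\ref{L:aux}'' only yields pointwise kernel comparisons, not operator bounds. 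One could instead try to invoke Proposition~\ref{P:1-Hardy} on each $\Dt$ and restrict to $\bd D$ (which is one of the exhausting hypersurfaces of $\Dt$), but that requires the constants in \cite[Theorem 7]{LS-5} and in the equivalence \eqref{E:5.2pp} to be uniform over the family $\{\Dt\}$---a claim stated nowhere, which would need its own proof.

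The paper is structured precisely so that no uniform-in-$t$ operator bound is ever needed. It argues in two steps: first, the class $\Ha$ (H\"older functions in the sense of \eqref{E:3.4} that are boundary values of functions holomorphic in $D$ and continuous on $\bar D$) is dense in $\hp(\bd D)$; this is obtained by taking $C^1$ functions $h_n\to f$ in $L^p$ and setting $f_n=\Ctr(h_n)$, using Proposition~\ref{P:1-Hardy}, Proposition~\ref{P:2-Hardy} and Proposition~\ref{P:13}. Second, the perturbed integrals $\Cint$, $t<0$, are applied only to a fixed $f\in\Ha$, for which $F_t\to f$ uniformly on $\bd D$; a diagonal choice $t_n\to0^-$ then finishes. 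In other words, where you interpolate $\|T_tf-f\|_{L^p}\le\|T_t\|\,\|f-g\|_{L^p}+\dots$ and hence need $\|T_t\|$ uniformly, the paper never lets the perturbed operators act on anything outside the dense class, so only the single, already-established bound for $\Ctr$ (Theorem~\ref{T:1}) enters. If you want to salvage your scheme, the cleanest fix is to adopt this restructuring rather than attempt the uniform estimate.
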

 In particular, the space of functions that arise as restrictions to $\bd D$ of functions that are holomorphic in $D$ and continuous on $\bar D$, is dense in $\hp (\bndry D)$. \\
   
 \noindent{\em Proof of Theorem \ref{T:3-Hardy}}. 
 We denote by 
 $\Ha$ the space of functions $f$ on $\bd D$ that satisfy the H\"older condition \eqref{E:3.4} and moreover, arise as 
 $$
 f=\dot F\, ,
 $$
 with $F$ holomorphic in $D$ and continuous in $\bar D$. Note that $\Ht \subset \Ha$.
 
 The proof of the proposition is given in two steps: in the first step we show that
  $\Ha$ is dense in $\hp (\bd D)$. Let $f\in \hp (\bd D)$, with $\dot F = f$, where $F\in \Hp (D)$. Note that by Proposition \ref{P:2-Hardy} we have
  \begin{equation*}
  \Cin (f) = F.
  \end{equation*}
  Now let $\{h_n\}$ be a sequence of $C^1$-functions on $\bd D$ (which automatically satisfy \eqref{E:3.4}) so that 
  \begin{equation*}
  h_n\to f
  \end{equation*}
   in the $L^p(\bd D)$-norm, and set $F_n=\Cin (h_n)$.  As in the proof of 
    \cite[Proposition 6]{LS-5},
   we see that $F_n$ is holomorphic in $D$ and continuous in $\bar D$. Let 
  $$
  f_n = \dot F_n = \Ctr (h_n).
    $$
  Then by 
    Proposition \ref{P:13} we have that
  $f_n$ is H\"older-continuous in the sense of \eqref{E:3.4} and we conclude that
  $$
  \{f_n\}_n\subset \Ha\, .
  $$
  
  We claim that 
    $f_n\to f$ in $\hp (\bd D)$. Indeed  by the definitions of $F_n$ and of $F$ we have that
 $$
 F_n-F= \Cin (h_n - f)\, ,
 $$
 and Proposition \ref{P:1-Hardy} thus grants that
 $$
 \| F_n-F\|_{\Hp (D)} \lesssim \|h_n-f\|_{L^p(\bndry D)}\to 0.
 $$
 On the other hand by \eqref{E:5.3} we also have
 $$
 \| F_n-F\|_{\Hp (D)} \approx \| \dot F_n-\dot F\|_{L^p (\bndry D)} = 
 \|f_n-f\|_{L^p(\bndry D)}
 $$
 from which the desired convergence follows.
 
 In the second step of the proof of Theorem \ref{T:3-Hardy} we show that any $f\in \Ha$ can be  approximated uniformly on $\bd D$ by  a family $\{F_t\}_t$ of functions whose boundary values are in $\Ht$. To  construct such a family we use the 
  Cauchy integrals $\{\Cint\}_t$ for the domains $\Dt$ that were defined in Section \ref{SS:approx-dom} for {\em negative} $t$  (note that then $\bar D\subset \Dt$), and apply them to a suitable transposition
of $f$ to  $\bndry D_t$.
 More precisely, given $f\in \Ha$ we define $\widetilde f$ by requiring that
  $\widetilde f(\Pt (w)) = f(w)$ if $t$ is negative and sufficiently small. (Here $\Pt:\bndry D\to \bndry D_t$ is again as in Section \ref{SS:approx-dom}.)
 Now define 
  $$
F_t (z)=  \Cint (\widetilde f)(z),
\quad z\in\Dt,\quad t<0 \quad \mbox{and small,}
 $$
 where we recall that
 $$
  \Cint (\widetilde f)(z) :=\int\limits_{w\in\bd \Dt}\!\!\!\!\! \widetilde f(\zeta)\,\Cker (\zeta, z)
\quad z\in\Dt
$$
is the afore-mentioned Cauchy integral for $\Dt$. Then by part (1) of Proposition \ref{P:aux-Hardy} 
we have that each $F_t$ is holomorphic in $\Dt$ and so its restriction to $\bndry D$ belongs to $\Ht$.

  It will suffice to show that
 $$
  F_t(z) \to f(z)\quad \mbox{uniformly for}\  z\in \bd D,\quad  \mbox{as}\  t\to 0.$$ 
 
 To this end, note that
 \begin{equation}\label{E:int-pre-chg}
   F_t(z) = f(z)+\!\!
  \int\limits_{\zeta\in\bd \Dt}\!\!\!\!\big[\widetilde f(\zeta)-f(z)\big]\,\Cker (\zeta, z)\, ,\quad z\in \bd D\, 
 \end{equation}
 because
 $$
  \int\limits_{\zeta\in\bd \Dt}\!\!\!\! \Cker (\zeta, z)=1\quad \mbox{for}\ z\in \Dt
 $$
 by conclusion (2) of Proposition \ref{P:aux-Hardy}. By an analogous formulation of the change of variables formula \eqref{E:change-1}, the identity \eqref{E:int-pre-chg} can be
 re-written as
 \begin{equation}\label{E:int-post-chg}
  F_t(z)
  = f(z)+\!\!
  \int\limits_{w\in\bd D}\!\!\!\!\big[f(w)-f(z)\big]\,J_t(w)\, \Cker (\Pt(w), z)\, ,\quad z\in \bd D\, .
 \end{equation}
 We point out that a corresponding representation
 for $\Ctr (f)$ was given in \cite[(3.2)]{LS-5}, namely
  \begin{equation}\label{E:int-post-chg-a}
  \Ctr (f)(z) = f(z)+\!\!
  \int\limits_{w\in\bd D}\!\!\!\!\big[f(w)-f(z)\big]\,\Cker (w, z)\, ,\quad z\in \bd D\, .
 \end{equation}
We next remark that whenever $f\in\Ha$, one has
\begin{equation}\label{E:id-Ctr}
\Ctr (f) = f\, .
\end{equation}
 To see this, we write $f$ as $\dot F$, for some $F\in \Hp$ (in fact for $F$ holomorphic in $D$ and continuous on $\bar D$).
 Then it follows by Proposition \ref{P:2-Hardy} that $\Cin f =F$ on $D$; by Proposition \ref{P:5} this identity
 extends to $\bar D$, and \eqref{E:id-Ctr} then follows by the definition of $\Ctr$.
 Combining \eqref{E:id-Ctr} with \eqref{E:int-post-chg-a} we obtain
 \begin{equation*}
   f(z) = f(z)+\!\!
  \int\limits_{w\in\bd D}\!\!\!\!\big[f(w)-f(z)\big]\,\Cker (w, z)\, ,\quad z\in \bd D\, .
 \end{equation*}
 Subtracting the above from \eqref{E:int-post-chg} we find
 \begin{equation*}
  F_t(z) - f(z) = I_t(z) + II_t(z)\,
 \end{equation*}
 where
 \begin{equation*}
 I_t(z)=  \int\limits_{w\in\bd D}\!\!\!\!(f(w)- f(z))\,\big[ \Cker (\Pt(w), z) - \Cker (w, z)\big]\,  ,
  \end{equation*}
  and
   \begin{equation*}
II_t(z)=  \int\limits_{w\in\bd D}\!\!\!\!(f(w)- f(z))\,\big[ J_t(w) -1\big]\, \Cker (\Pt(w), z) ,
 \end{equation*}
 To treat $I_t(z)$, we break the integral on $\bd D$ into two parts: when $\d(w, z)\leq a$, and $\d(w, z)\geq a$. 
 To study the
 integration in $w$ where $\d(w, z)\leq a$, we invoke the following inequality concerning the 
 denominators $\ge (w, z)$ that were described in Section \ref{SS:backgr-Cauchy}, which
 is a consequence
 of  Lemma \ref{L:aux} in the Appendix below,
 \begin{equation}\label{E:aux-for-app}
 |\g (\Pt (w), z)|\gtrsim |\g (w, z)|,\quad \mbox{for} \ \ w, z\in\bd D.
 \end{equation}
 Assuming for now the truth of this inequality, 
 we see 
 by the H\"older-regularity of $f$ 
  that 
  the integrand above is bounded by a multiple of 
 $$
 \frac{1}{\d(w, z)^{2n}}\,\d(w, z)^\alpha \, .
 $$
 Thus, by  \eqref{E:2.16} the integral on the set where $\d(w, z)\leq a$ 
 is majorized by a multiple of 
 $$
 \int\limits_{\d(w, z)\leq a}\!\!\!\!\!\d(w, z)^{-2n+\alpha}\ \lesssim\ a^\alpha\, .
 $$
 
 On the other hand,  by the continuity of
 $\Cker(w, z)$ where $\d(w, z)\geq a$, we see that integration over this set gives a quantity that tends to
 0 uniformly as $t\to 0$. Since $a$ can be chosen arbitrarily small, this shows that the first term $I_t(z)$
 tends to $0$
 as $t\to 0$, uniformly in $z\in\bndry D$.
 The second term $II_t(z)$ can be treated similarly to conclude that $II_t(z)\to 0$ uniformly in $z\in\bndry D$, as well.
 
 Combining all of the above, we conclude that 
   $$
 \sup\limits_{z\in\bd D}|F_t(z) - f(z)|\to 0\quad \mbox{as}\ \ t\to 0^-,
 $$
 and the proposition is established.
  \qed
  
  \medskip
  
The second representation of $\hp (\bd D)$ is as the range of the Cauchy transform
 $\Ctr$.
  \begin{Cor}\label{C:1-Hardy}
  Suppose $h\in L^p(\bd D, d\sigma)$. Then $h\in \hp (\bd D, d\sigma)$ if, and only if, $h=\Ctr (h)$.
  \end{Cor}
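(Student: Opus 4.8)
The plan is to prove the two implications separately, treating sufficiency as an immediate consequence of Proposition \ref{P:1-Hardy} and necessity via Proposition \ref{P:2-Hardy}. The crux of both directions is the boundary-value identity
\[
\dot{\Cin(h)} = \Ctr(h), \qquad h \in L^p(\bd D),
\]
asserting that the non-tangential boundary values of the holomorphic Cauchy integral agree with the Cauchy transform. For $h$ satisfying the H\"older condition \eqref{E:3.4} this is the very definition of $\Ctr$ (namely $\Ctre(f) = \dot{\Cin(f)}$); to pass to a general $h \in L^p(\bd D)$ I would reuse the approximation scheme from the proof of Proposition \ref{P:1-Hardy}. Choose $C^1$ functions $g_k \to h$ in $L^p(\bd D)$ and set $F_k = \Cin(g_k)$, so that $\dot F_k = \Ctr(g_k)$. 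On the one hand, for each fixed $z \in D$ the kernel $C(\cdot, z)$ is bounded on $\bd D$, whence $F_k \to \Cin(h)$ pointwise in $D$, and by Proposition \ref{P:1-Hardy} together with \eqref{E:5.2pp} one has $\dot F_k \to \dot{\Cin(h)}$ in $L^p(\bd D)$. On the other hand, the $L^p$-boundedness of $\Ctr$ (Theorem \ref{T:1}) gives $\Ctr(g_k) \to \Ctr(h)$ in $L^p(\bd D)$. Comparing the two limits yields the displayed identity.

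For necessity, suppose $h \in \hp(\bd D)$ and write $h = \dot F$ with $F \in \Hp(D)$. Proposition \ref{P:2-Hardy} gives $F = \Cin(h)$ throughout $D$; passing to non-tangential boundary limits and invoking the identity above, $h = \dot F = \dot{\Cin(h)} = \Ctr(h)$, as required. For sufficiency, suppose $h = \Ctr(h)$. By the second assertion of Proposition \ref{P:1-Hardy} we have $\Ctr(h) \in \hp(\bd D)$, and hence $h \in \hp(\bd D)$. This completes the proof.

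The only genuinely delicate point is the boundary-value identity of the first paragraph: $\Ctr$ is defined a priori only on the H\"older class \eqref{E:3.4} and extended to $L^p(\bd D)$ by continuity (Theorem \ref{T:1}), whereas $\Cin$ acts directly on all of $L^p(\bd D)$, so one must check that taking boundary values commutes with the $L^p$-limit defining the extension. I expect this to be the main obstacle, but it is essentially already carried out inside the proof of Proposition \ref{P:1-Hardy}, where the same sequence $F_k = \Cin(g_k)$ is shown to converge in $\Hp(D)$ to $\Cin(h)$ while $\dot F_k = \Ctr(g_k) \to \Ctr(h)$ in $L^p(\bd D)$; the matching of these two limits is exactly what is needed. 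Everything else is a formal consequence of Propositions \ref{P:1-Hardy} and \ref{P:2-Hardy}.
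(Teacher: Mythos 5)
Your proof is correct, but the necessity direction follows a genuinely different route from the paper's. The paper deduces $h=\Ctr(h)$ for $h\in\hp(\bd D)$ from the density theorem (Theorem \ref{T:3-Hardy}): it approximates $h$ in $L^p(\bd D)$ by functions $f_n=\dot F_n$ with $F_n$ holomorphic in $D$ and \emph{continuous on} $\bar D$, for which the fixed-point identity $\Ctr(f_n)=f_n$ is immediate from Proposition \ref{P:2-Hardy} together with Proposition \ref{P:5} (the identity $\Cin(f_n)=F_n$ extends to $\bar D$ by continuity, so boundary values are honest restrictions), and then passes to the limit using the $L^p$-boundedness of $\Ctr$. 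You instead bypass Theorem \ref{T:3-Hardy} entirely: you first establish the boundary-value identity $\dot{\Cin(h)}=\Ctr(h)$ for \emph{every} $h\in L^p(\bd D)$ by recycling the approximation scheme from Proposition \ref{P:1-Hardy} (the matching of the two limits is legitimate: $\|F_k-\Cin(h)\|_{\Hp(D)}=\|\Cin(g_k-h)\|_{\Hp(D)}\lesssim\|g_k-h\|_{L^p}$ by linearity and Proposition \ref{P:1-Hardy}, and \eqref{E:5.2pp} converts $\Hp$-convergence into $L^p$-convergence of the non-tangential boundary values), and then apply Proposition \ref{P:2-Hardy} directly to $F$ with $\dot F=h$. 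What your route buys is a lighter logical dependence: the corollary becomes available immediately after Propositions \ref{P:1-Hardy} and \ref{P:2-Hardy}, without the approximating-domains machinery (the family $\{D_t\}$, Lemma \ref{L:aux}) that underlies Theorem \ref{T:3-Hardy}; moreover the identity $\dot{\Cin(h)}=\Ctr(h)$ on all of $L^p$ is a useful sharpening of Proposition \ref{P:1-Hardy} in its own right. What the paper's route buys, given that Theorem \ref{T:3-Hardy} has already been proved at that point, is brevity and the avoidance of any a.e.\ non-tangential limit bookkeeping: all identities are first verified on a dense class where boundary values are restrictions of continuous functions, and only then extended by continuity of $\Ctr$.
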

  \begin{proof}
  Recall that if $h\in L^p(\bd D, d\sigma)$ then $\Ctr (h)\in \hp (\bd D, d\sigma)$, by Proposition \ref{P:1-Hardy}. Thus, if $h=\Ctr (h)$ we have that $h\in \hp (\bd D, d\sigma)$.
  Conversely if $h\in\hp (\bd D, d\sigma)$, by the density just proved we can approximate it by a sequence
  $\{f_n\}$ with the property that $f_n=\dot F_n$ with $F_n$ holomorphic in $D$ and continuous in $\bar D$.
  Hence, 
  $$
  \Ctr (f_n) = \Cin (\dot F_n)\big|_{\bd D} = f_n
  $$
  where the last equality is due to the identity $\Cin (\dot F_n)(z) = F_n (z)$ for $z$ in $D$ (Proposition 
  \ref{P:2-Hardy}), which extends  to $z$ in $\bar D$ because of the continuity of $F_n$ on $\bar D$. Thus
  $$
  f_n = \Ctr (f_n)\quad \mbox{for each}\ n\, ,
  $$
  and the conclusion $h=\Ctr (h)$ follows by the continuity of $\Ctr$ in the $L^p(\bd D, d\sigma)$-norm.
  \end{proof}
  
     \subsection{Comparing the Cauchy-Szeg\H o projection with the Cauchy integral}
     Proposition \ref{P:1-Hardy} and Corollary \ref{C:1-Hardy} show that $\Ctr$ is a projection:
     $L^p(\bndry D, d\sigma)\to \hp (\bndry D, d\sigma)$. Thus, when $p=2$ we may compare
     $\Ctr$ with the Cauchy-Szeg\H o projection $\S$, which is the {\em orthogonal} projection:
     $L^2(\bndry D, d\sigma)\to \mathcal H^2 (\bndry D, d\sigma)$.

   \begin{Prop}\label{P:6.2.1}
As operators on $L^2(\bndry D, d\sigma)$ we have
\medskip

(a)\quad $\Ctre\S =\S$
\medskip

(b)\quad $\S\Ctre = \Ctre$
\medskip
\end{Prop}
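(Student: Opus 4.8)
The plan is to exploit the two characterizations of $\hp (\bd D)$ that have just been established: Corollary \ref{C:1-Hardy} says that $f \in \hp (\bd D)$ if and only if $\Ctre (f) = f$, and by definition the orthogonal projection $\S$ maps $L^2 (\bd D, d\sigma)$ onto $\mathcal H^2 (\bd D, d\sigma)$. The key observation is that both $\Ctre$ and $\S$ are projections with the \emph{same} range, namely $\mathcal H^2 (\bd D, d\sigma)$; they differ only in that $\S$ is orthogonal while $\Ctre$ need not be. This is precisely the situation in which the two composition identities (a) and (b) hold automatically, by the abstract fact that if $P$ and $Q$ are two (bounded linear) projections on a Hilbert space with $\operatorname{range}(P) = \operatorname{range}(Q)$, then $PQ = Q$ and $QP = P$.

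For part (a), I would argue directly: let $f \in L^2 (\bd D, d\sigma)$ be arbitrary. Then $\S (f) \in \mathcal H^2 (\bd D, d\sigma)$ because $\S$ projects onto the Hardy space. By Corollary \ref{C:1-Hardy}, every element $g \in \mathcal H^2 (\bd D, d\sigma)$ satisfies $\Ctre (g) = g$; applying this with $g = \S (f)$ gives $\Ctre \S (f) = \S (f)$. Since $f$ was arbitrary, $\Ctre \S = \S$ as operators on $L^2 (\bd D, d\sigma)$. This direction is the easy one and uses only that the range of $\S$ sits inside the range of $\Ctre$ together with the fixed-point characterization.

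For part (b), the argument is dual and relies on the orthogonality of $\S$. I would use that $\S$ acts as the identity on $\mathcal H^2 (\bd D, d\sigma)$ and annihilates its orthogonal complement $\big(\mathcal H^2 (\bd D, d\sigma)\big)^\perp$. For arbitrary $f \in L^2 (\bd D, d\sigma)$, Proposition \ref{P:1-Hardy} (at $p=2$) tells us $\Ctre (f) \in \mathcal H^2 (\bd D, d\sigma)$, so $\S$ leaves it fixed: $\S \Ctre (f) = \Ctre (f)$, which is exactly (b). Thus both identities follow immediately once we invoke Proposition \ref{P:1-Hardy} (range of $\Ctre$ lies in $\mathcal H^2$) and the characterization in Corollary \ref{C:1-Hardy} ($\Ctre$ fixes $\mathcal H^2$), combined with the two defining properties of the orthogonal projection $\S$.

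The main obstacle, such as it is, lies less in the algebra and more in making sure all the operators are genuinely well-defined and bounded on $L^2 (\bd D, d\sigma)$ so that the identities hold on the whole space rather than merely on a dense subspace. The boundedness of $\Ctre$ on $L^2$ is Theorem \ref{T:1}, and the fact that $\Ctre$ is a projection with range $\mathcal H^2 (\bd D, d\sigma)$ is the content of Proposition \ref{P:1-Hardy} together with Corollary \ref{C:1-Hardy}; $\S$ is bounded by construction as an orthogonal projection. Once these are in hand, both (a) and (b) are one-line consequences, so the real work has already been done in the preceding results and the proof of this proposition is essentially a clean bookkeeping of ranges and fixed points.
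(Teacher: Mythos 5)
Your proposal is correct and follows essentially the same argument as the paper: part (a) applies Corollary \ref{C:1-Hardy} to $g=\S f\in\mathcal H^2(\bndry D, d\sigma)$, and part (b) uses Proposition \ref{P:1-Hardy} (with $p=2$) to see that $\Ctre(f)\in\mathcal H^2(\bndry D, d\sigma)$ is fixed by $\S$. The surrounding remarks about two projections sharing a common range and about $L^2$-boundedness are accurate framing but do not change the substance, which matches the paper's proof step for step.
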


In fact, whenever $f\in L^2(\bndry D)$, then $g=\S f\in \htwo$, by definition of $\S$. If we apply
 Corollary \ref{C:1-Hardy}
   (to $g=\S f$) we see that $\Ctre (\S f) = \Ctre (g) =g = \S f$, proving {\em (a)}. Next, by Proposition \ref{P:1-Hardy}, 
   $\Ctre (f)\in\htwo$, for 
 $f\in L^2(\bndry D)$. Thus $\S\Ctre (f) = \Ctre (f)$, which shows {\em (b)}, thus proving the proposition.
 
 We point out that a corresponding version of Proposition \ref{P:6.2.1} holds for the orthogonal projections: $\S_\omega: L^2(\bndry D, \omega\, d\sigma)\to \mathcal H^2(\bndry D, \omega\, d\sigma)$ for the densities $\omega$ discussed in the introduction (so in particular for the Leray-Levi measure).
 \section{Further results}\label{S:further}

  To conclude, we give a further characterization of $\hp (\bndry D, d\sigma)$ as the range of the Cauchy-Szeg\H o projection $\S$. This uses the $L^p(\bndry D)$-regularity
   of the Cauchy-Szeg\H o projection $\S$, proven in \cite[Theorem 16]{LS-5}, and the approximation theorem just proved (Theorem \ref{T:3-Hardy}).
 
  \begin{Prop}\label{L:Har-Sz}
  Suppose $f\in L^p(\bd D)$, $1<p<\infty$. Then $\S (f)\in \hp (\bd D)$. Furthermore,  we have that $f\in \hp (\bd D)$ if, and only if, $f=\S (f)$.   \end{Prop}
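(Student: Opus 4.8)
The plan is to build on two facts established earlier or in the companion paper: the $L^p(\bd D)$-boundedness of the Cauchy--Szeg\H o projection $\S$ (\cite[Theorem 16]{LS-5}), and the approximation Theorem \ref{T:3-Hardy}. Since $\S$ is defined a priori only as the orthogonal projection of $L^2(\bd D)$ onto $\htwo$, the first order of business is to make sense of $\S(f)$ for $f\in L^p(\bd D)$: the cited estimate $\|\S f\|_{L^p(\bd D)}\lesssim\|f\|_{L^p(\bd D)}$, valid on the dense subspace $L^2(\bd D)\cap L^p(\bd D)$, produces a unique bounded extension of $\S$ to all of $L^p(\bd D)$, which agrees with the orthogonal projection on $L^2(\bd D)\cap L^p(\bd D)$.

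First I would prove the assertion $\S(f)\in\hp(\bd D)$. For $f\in L^2(\bd D)\cap L^p(\bd D)$ this is immediate: $\S f\in\htwo$ by the very definition of $\S$, while $\S f\in L^p(\bd D)$ by the boundedness just invoked, so $\S f\in\htwo\cap L^p(\bd D)$ and Corollary \ref{C:5.5} yields $\S f\in\hp(\bd D)$. For a general $f\in L^p(\bd D)$ I would pick $f_n\in L^2(\bd D)\cap L^p(\bd D)$ with $f_n\to f$ in $L^p(\bd D)$; then $\S f_n\to\S f$ in $L^p(\bd D)$ by continuity, and since each $\S f_n$ lies in the closed subspace $\hp(\bd D)$ of $L^p(\bd D)$, the limit $\S f$ lies there as well. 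This already gives the easy half of the equivalence: if $f=\S f$, then $f\in\hp(\bd D)$.

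The remaining implication, that $f\in\hp(\bd D)$ forces $f=\S f$, is where Theorem \ref{T:3-Hardy} enters. The key point is that $\S$ restricts to the identity on the dense class $\Ht$: a function $f\in\Ht$ is the restriction to $\bd D$ of a function holomorphic in a neighborhood of $\bar D$, hence is continuous and bounded on $\bd D$, so $f\in\htwo$, and therefore $\S f=f$ because $\S$ is the orthogonal projection onto $\htwo$ (and its $L^p$-extension agrees with it on $L^2(\bd D)\cap L^p(\bd D)$). Given an arbitrary $f\in\hp(\bd D)$, Theorem \ref{T:3-Hardy} provides $f_n\in\Ht$ with $f_n\to f$ in $L^p(\bd D)$. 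Then $\S f_n=f_n\to f$, while at the same time $\S f_n\to\S f$ by the $L^p$-continuity of $\S$; comparing the two limits gives $f=\S f$, completing the proof.

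I expect the principal difficulty to be conceptual rather than computational: it lies entirely in reconciling the $L^2$-definition of $\S$ with its $L^p$-action. Two points require care. First, one must ensure that the bounded $L^p$-extension is genuinely consistent with the orthogonal projection on $L^2(\bd D)\cap L^p(\bd D)$, so that the identity $\S f=f$ on the dense class is literally the elementary $L^2$-statement. Second, one must verify that $\S$ maps into $\hp(\bd D)$ and not merely into $L^p(\bd D)$; this is exactly the role of the passage through $\htwo$ together with Corollary \ref{C:5.5} and the closedness of $\hp(\bd D)$. All the genuine analytic content---the $L^p$-bound for $\S$ and the density of $\Ht$---is imported from \cite{LS-5} and Theorem \ref{T:3-Hardy}, so the argument itself is a soft functional-analytic assembly of these inputs.
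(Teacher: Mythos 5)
Your proposal is correct and follows essentially the same route as the paper: both halves rest on the same inputs (the $L^p$-bound for $\S$ from \cite[Theorem 16]{LS-5}, Corollary \ref{C:5.5}, the closedness of $\hp(\bd D)$ in $L^p(\bd D)$, and Theorem \ref{T:3-Hardy} together with the fact that $\S$ is the identity on $\Ht\subset\htwo$), and your proof of the equivalence is word-for-word the paper's argument. The only cosmetic difference is in the first conclusion, where the paper splits into the cases $p\geq 2$ and $p<2$ and approximates by $C^1$ functions (passing through $\mathcal H^q(\bd D)$, $q>2$), while you approximate directly by $L^2(\bd D)\cap L^p(\bd D)$; this is an immaterial variation of the same soft argument.
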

  \begin{proof} To prove the first conclusion, we  consider first the case when $p\geq2$. 
  Then $\S (f)\in \htwo\cap L^p (\bndry D)$ (because $L^p\subseteq L^2$ and $\S: L^2\to \htwo$) and thus
  $\S (f) \in \hp (\bndry D)$ by Corollary \ref{C:5.5}. In the case when $p<2$, 
  we take $(f_n)_n\subset C^1(\bndry D)$ with $\|f_n-f\|_p\to0$. Then $\S (f_n)\in \mathcal H^q(\bndry D)$
  for any $q>2$ (by the case just proved, since $f_n\in C^1(\bndry D)\subset L^q(\bndry D)$ and $q>2$). But $\mathcal H^q(\bndry D)\subset
  \hp (\bndry D)$ (because $q>2>p$) and so $\{\S (f_n)\}_n\subset \hp (\bndry D)$.
 Furthermore $\|\S(f_n-f)\|_p\lesssim \|f_n-f\|_p$ by the $L^p(\bndry D)$-regularity of $\S$.
    We conclude that $\S (f)\in \hp (\bndry D)$ by the completeness of $\hp (\bndry D)$.
 This proves the first conclusion of the proposition. To prove the second conclusion, suppose first that
  $f\in \hp (\bd D)$; then by Theorem \ref{T:3-Hardy} there is a sequence
  $\{f_n\}\subset \Ht$ such that $f_n\to f$ in $L^p(\bndry D, d\sigma)$. But $\Ht\subset \mathcal H^2(\bndry D)$ and thus $\S f_n = f_n$ (by the definition of $\S$). On the other hand, $\S f_n \to \S f$ in $L^p(\bndry D)$ by the regularity of $\S$ in 
  $L^p(\bndry D)$.
   Thus $\S f = f$.
  Conversely, if $f\in L^p(\bndry D)$ and $f=\S (f)$, then $f\in \hp (\bd D)$ by the first
  conclusion of the proposition.
   \end{proof}
   \section{Appendix}\label{S:app}
   \bigskip

Consider the $C^1$-smooth vector field $\nu$ defined in a neighborhood of $\bndry D$ by 
 $$
 \nu (h) (x) =\frac{1}{|\nabla\rho (x)|^2}\sum\limits_{j=1}^{2n}\frac{\dee\rho}{\dee x_j}(x)\,\frac{\dee h}{\dee x_j}(x)\, \quad x\in U,\quad h\in C^1(U).
 $$
 Note that $\nu(\rho) (x) \equiv 1$. Let $\Pt (x) = \exp (t\,\nu) (x)$ be the exponential map associated with $\nu$,
defined on a small neighborhood $U'$ of $\bndry D$, for small $t$. One knows that
 $$
 x\mapsto\Pt (x),\quad x\in U'
 $$
 is a $C^1$-smooth mapping when $t$ is small. We recall that $\Pt (\cdot) (x)$ arises as the solution of the time-independent 
   differential equation
 \begin{equation}\label{E:1A}
 \frac{d}{dt}\left(\Pt (\cdot)(x)\right) = a\left(\Pt (\cdot) (x)\right)\quad \mbox{with}\quad a(u)=
 \frac{\nabla\rho (u)}{|\nabla\rho (u)|^2},
 \end{equation}
 with initial condition 
 $$
 \Pt  (x)\big|_{t=0} = x\, .
 $$
 As a result,
 $$
 \frac{d}{dt}\left(\rho(\Pt (x))\right) \equiv 1\, ,
 $$
 and hence $\rho(\Pt (x)) = \rho (x) + t = \rt (x)$.

  The existence, uniqueness and $C^1$-regularity of the solution to equations like \eqref{E:1A} guarantee the following properties:
  
 \begin{itemize}
 \item \quad $\Phi_{t_1}\circ\Phi_{t_2}= \Phi_{t_1+t_2}\quad \mbox{if the}\quad t_j's \quad \mbox{are small;}$
 \item \quad $\Phi_0 =$ Identity.
  \end{itemize}
Note that these  properties imply that $\Pt$ is a $C^1$-bijection:  $\bndry D \to \bndry \Dt$ with inverse $\Phi_{-t}$. Moreover $J_t (x)$ (the Jacobian determinant of $\Pt$)  has the following properties:
\begin{itemize}
\item \quad $0<c_1\leq J_t(x)\leq c_2<\infty$ uniformly in $t$ and $x\in\bndry D$;
\item\quad $|J_t(x) -1|\to 0$ as $t\to 0$, uniformly in $x\in \bndry D$;
\item \quad The following change of variable formula holds:
\begin{equation}\label{E:change-1}
\int\limits_{\bndry D}\!\!\tilde f (\Pt (w))\, J_t(w)\, d\sigma (w)  =\!\!
\int\limits_{\bndry D_t}\!\!\tilde f (\zeta)\, d\sigma_t (\zeta)
\end{equation}

whenever $\tilde f$ is integrable on $\bndry D_t$.
\end{itemize}
\subsection{ Proof of inequality \eqref{E:aux-for-app}} The proof of this inequality is a consequence of the following lemma.
  \begin{Lem}\label{L:aux} For $t<0$, $t$ small, we have
 $$
 |\g (\Pt (w), z)|\approx |\g (w, z)| + |t|\, , \ \  \mbox{for}\quad w, z\in\bd D\, .
 $$
 \end{Lem}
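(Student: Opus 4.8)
The plan is to prove the two-sided bound by treating the upper and lower estimates separately, using the $C^1$-dependence of $\g$ on $w$ together with the bounded speed of the flow $\Pt$ for the upper bound, and the coercive inequality \eqref{E:2.4'} evaluated at the displaced point $\Pt(w)$ for the lower bound. I first record two elementary facts about the flow, valid for $w\in\bd D$ and $t<0$ small. Since by construction $\Pt(w)\in\bd\Dt=\{\rho=-t\}$, one has $\rho(\Pt(w))=-t=|t|>0$; and since the generating field $a$ in \eqref{E:1A} has bounded magnitude $|a(u)|=|\nabla\rho(u)|^{-1}$ near $\bd D$, integrating the flow over parameter length $|t|$ gives $|\Pt(w)-w|\le C|t|$.

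For the upper bound I use that $\g$ is of class $C^1$ in $w$ and that $\nabla_w\g(w,z)$ is bounded uniformly for $w,z\in\bd D$: being, up to $C^1$ coefficients, a quadratic polynomial in $w-z$, the denominator $\g$ has no singularity as $w\to z$, and is therefore Lipschitz in $w$ with a constant uniform in $z\in\bd D$. Hence
$$
|\g(\Pt(w),z)-\g(w,z)|\le C|t|,\qquad w,z\in\bd D,
$$
which yields at once $|\g(\Pt(w),z)|\le|\g(w,z)|+C|t|\lesssim|\g(w,z)|+|t|$, as well as the symmetric inequality $|\g(\Pt(w),z)|\ge|\g(w,z)|-C|t|$ that I reuse below.

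For the lower bound I apply \eqref{E:2.4'} with $\Pt(w)$ in place of $w$. Using $\rho(\Pt(w))=|t|$ and $\rho(z)=0$ (as $z\in\bd D$), I obtain
$$
\Re\,\g(\Pt(w),z)\ \ge\ c'\big(|t|+|\Pt(w)-z|^2\big)\ \ge\ c'|t|,
$$
hence $|\g(\Pt(w),z)|\ge c'|t|$. Combining this with the symmetric difference estimate through a short two-case argument (comparing $C|t|$ with $\tfrac12|\g(w,z)|$: in the first case $|\g(\Pt(w),z)|\ge\tfrac12|\g(w,z)|$, in the second $|\g(\Pt(w),z)|\ge c'|t|\gtrsim|\g(w,z)|$) gives $|\g(\Pt(w),z)|\gtrsim|\g(w,z)|+|t|$. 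Together with the upper bound this establishes the equivalence, and inequality \eqref{E:aux-for-app} follows immediately since $|\g(w,z)|+|t|\ge|\g(w,z)|$.

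The crux is the lower bound $|\g(\Pt(w),z)|\gtrsim|t|$, and it is precisely here that the hypothesis $t<0$ enters: only for $t<0$ does $\Pt(w)$ lie on the outer side of $\bd D$, where $\rho(\Pt(w))=+|t|>0$ makes the right-hand side of \eqref{E:2.4'} coercive; for $t>0$ that term carries the opposite sign and the estimate degenerates. The only other point requiring care is to keep the flow within the neighborhood of $\bd D$ on which both \eqref{E:2.4'} and the uniform $C^1$-bounds on $\g$ are valid, which is guaranteed for $|t|$ small.
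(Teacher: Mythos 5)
Your proof is correct, and it takes a genuinely different route from the paper's. The paper's own argument reduces to the near-diagonal case, writes $\g$ via the Levi polynomial $\langle\dee\rho(w),w-z\rangle+Q_w(w-z)$, proves the transfer estimate \eqref{E:nuw} (which trades the outward displacement of $w$ for an inward displacement of $z$ up to errors $O(\delta|w-z|+\delta^2)$), Taylor-expands the flow as $\Pt(w)=w-\delta\nu_w+o(\delta)$ with $\delta=-t/|\nabla\rho(w)|$, and then invokes the external result \cite[Corollary 2]{LS-5}, namely $|\g(w,z+\delta\nu_z)|\approx|\g(w,z)|+\delta$, to conclude. You instead prove the two inequalities separately: the upper bound from the uniform Lipschitz dependence of $\g$ on $w$ (legitimate, since $\g$ is, up to $C^1$ coefficients, a quadratic polynomial in $w-z$, and the paper records that $\g$ is of class $C^1$ in $w\in\Omega$) together with the bound $|\Pt(w)-w|\lesssim|t|$ from the bounded speed of the flow \eqref{E:1A}; and the lower bound from the coercivity inequality \eqref{E:2.4'} applied at the displaced point, where $\rho(\Pt(w))=-t=|t|>0$, followed by a two-case combination with the difference estimate. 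Both arguments are sound. What yours buys is self-containedness: it uses only facts stated in this paper, avoiding both the claim \eqref{E:nuw} and the citation of \cite[Corollary 2]{LS-5}, and it dispenses with the near-diagonal reduction. What the paper's route buys is the finer geometric fact that an outward displacement of $w$ and an inward displacement of $z$ affect $\g$ comparably, a statement in the spirit of the symmetry of the quasi-distance $\d$, obtained by reusing machinery already established in \cite{LS-5}. You also correctly pinpoint where the hypothesis $t<0$ enters --- the coercive sign of $\rho(\Pt(w))$ in \eqref{E:2.4'} --- which matches the convention actually used in the paper's proof (note that the appendix's flow equation must be read with care on this sign), and you correctly restrict the flow to the neighborhood of $\bd D$ where \eqref{E:2.4'} and the uniform $C^1$-bounds on $\g$ are valid.
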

 \begin{proof}
 Without loss of generality we will assume that $w$ is close to $z$; then we may 
  write $\g (w, z) = \langle\dee\rho (w), w-z\rangle + Q_w(w-z)$, with 
 $Q_w (u)$ a quadratic form in $u$.  
 
 Letting $\nu_w$ denote the inner unit normal vector at $w\in\bndry D$, we claim that
 \begin{equation}\label{E:nuw}
 \g (w -\delta\nu_w, z) = \g (w, z+\delta\nu_z) + O(\delta|w-z|+\delta^2)
 \end{equation}
 
 when $\delta>0$ is sufficiently small. To prove this claim, we begin by noting 
 that
 $$
 \langle\dee\rho (w-\delta\nu_w), w-\delta\nu_w-z\rangle =
 \langle\dee\rho (w), w-\delta\nu_w-z\rangle + O(\delta |w-z| +\delta^2) 
 $$
 $$
 = \langle\dee\rho (w), w-z-\delta\nu_z\rangle + O(\delta |w-z| +\delta^2)\, . $$
 One similarly has
 $$
 Q_{w-\delta\nu_w}(w-\delta\nu_w-z)= Q_w(w-z+\delta \nu_z)+O(\delta|w-z|+\delta^2)\, ,
 $$
 which combined with the above proves the claim.
 
 Next, we observe that the first-order Taylor expansion at $t=0$ of the bijection $\Pt(w)$ (as a function of $t$, for fixed $w\in \bndry D$) is
 $$
 \Pt(w) = w+t\N_w+o(|t|)\quad \mbox{as}\ \ t\to 0
 $$
 with $o(|t|)$ uniform as $w$ ranges over $\bd D$, where
  $$
 \N_w=\frac{d\Pt (w)}{dt}\big|_{t=0}=\, \frac{\nu_w}{|\nabla\rho (w)|}\, .
 $$
 As a result we obtain
 $$
 \g (\Pt(w), z) = \g (w-\delta\nu_w, z) + o(\delta)\, \ \ \mbox{with}\  \ \delta =-\frac{t}{|\nabla\rho (w)|}>0\quad \mbox{and small}.
 $$
 By \eqref{E:nuw}
 we have
 $$
 |\g (w-\delta\nu_w, z)|
  =
 |\g (w, z+\delta\nu_z)|+ O(\delta|w-z|+\delta^2) +o(\delta).
 $$
 And by \cite[Corollary 2]{LS-5}
 $$
 |\g (w, z+\delta\nu_z)|+ O(\delta|w-z|+\delta^2) + o(\delta) \approx
 $$
 $$
 \approx
 |\g (w, z)| + \delta + o(\delta) + O(\delta|w-z|+\delta^2)  \approx |\g (w, z)| +|t|,
 $$
 since $\delta \approx |t|$ for $t$, and hence $\delta$, sufficiently small. 
\end{proof}

\end{document}